\numberwithin{equation}{section}
\newtheorem{theorem}[equation]{Theorem}
\newtheorem*{theorem*}{Theorem} \newtheorem{lemma}[equation]{Lemma}
\newtheorem*{conjecture*}{Mamma Conjecture}
\newtheorem*{conjecture1*}{Mamma Conjecture (revisited)}
\newtheorem{proposition}[equation]{Proposition}
\newtheorem{corollary}[equation]{Corollary}
\newtheorem*{corollary*}{Corollary}
\theoremstyle{remark}
\newtheorem{example}[equation]{Example}
\newtheorem{subexample}[equation]{Subexample}
\newtheorem{notation}[equation]{Notation}
\theoremstyle{remark}
\newtheorem{remark}[equation]{Remark}
\newcommand{\cA}{{\mathcal A}}
\newcommand{\cB}{{\mathcal B}}
\newcommand{\cC}{{\mathcal C}}
\newcommand{\cD}{{\mathcal D}}
\newcommand{\cF}{{\mathcal F}}
\newcommand{\cG}{{\mathcal G}}
\newcommand{\cH}{{\mathcal H}}
\newcommand{\cL}{{\mathcal L}}
\newcommand{\cN}{{\mathcal N}}
\newcommand{\cO}{{\mathcal O}}
\newcommand{\cQ}{{\mathcal Q}}
\newcommand{\cW}{{\mathcal W}}
\newcommand{\cX}{{\mathcal X}}
\newcommand{\cY}{{\mathcal Y}}
\newcommand{\cZ}{{\mathcal Z}}
\newcommand{\bbA}{\mathbb{A}}
\newcommand{\bbB}{\mathbb{B}}
\newcommand{\bbC}{\mathbb{C}}
\newcommand{\bbF}{\mathbb{F}}
\newcommand{\bbP}{\mathbb{P}}
\newcommand{\bbQ}{\mathbb{Q}}
\newcommand{\bbZ}{\mathbb{Z}}
\DeclareMathOperator{\id}{id}
\DeclareMathOperator{\NChow}{NChow} 
\DeclareMathOperator{\NNum}{NNum} 
\newcommand{\dgcat}{\mathrm{dgcat}} 
\newcommand{\perf}{\mathrm{perf}}
\newcommand{\dg}{\mathrm{dg}}
\newcommand{\uHom}{\underline{\mathrm{Hom}}}
\newcommand{\Hom}{\mathrm{Hom}}
\newcommand{\rep}{\mathrm{rep}}
\newcommand{\Hmo}{\mathrm{Hmo}}
\newcommand{\op}{\mathrm{op}}
\newcommand{\too}{\longrightarrow}
\newcommand{\ie}{\textsl{i.e.}\ }
\let\oldmarginpar\marginpar
\def\marginpar#1{\oldmarginpar{\tiny #1}}
\begin{document}

\title[HPD-invariance of the Tate, Beilinson and Parshin conjectures]{HPD-invariance of the \\Tate, Beilinson and Parshin conjectures}
\author{Gon{\c c}alo~Tabuada}
\address{Gon{\c c}alo Tabuada, Department of Mathematics, MIT, Cambridge, MA 02139, USA}
\email{tabuada@math.mit.edu}
\urladdr{http://math.mit.edu/~tabuada}
\thanks{The author was supported by a NSF CAREER Award}

%
\date{\today}
\keywords{Tate conjecture, Beilinson conjecture, Parshin conjecture, homological projective duality, determinantal variety, quadric, Hasse-Weil zeta function, orbifold, algebraic $K$-theory, topological periodic cyclic homology, noncommutative algebraic geometry}
\abstract{We prove that the Tate, Beilinson and Parshin conjectures are invariant under Homological Projective Duality (=HPD). As an application, we obtain a proof of these celebrated conjectures (as well as of the strong form of the Tate conjecture) in the new cases of linear sections of determinantal varieties and complete intersections of quadrics. Furthermore, we extend the original conjectures of Tate, Beilinson and Parshin  from schemes to stacks and prove these extended conjectures for certain low-dimensional global orbifolds.}}

\maketitle


\section{Introduction and statement of results}
Let $k:=\bbF_q$ be a finite field of characteristic $p$ with $q=p^n$, $W(k)$ the associated ring of $p$-typical Witt vectors, and $K:=W(k)[1/p]$ the fraction field of $W(k)$. Given a smooth projective $k$-scheme $X$, we will write $\cZ^\ast(X)_\bbQ$ for the (graded) $\bbQ$-vector space of algebraic cycles on $X$ up to rational equivalence, $\cZ^\ast(X)_\bbQ/_{\!\sim \mathrm{num}}$ for the quotient of $\cZ^\ast(X)_\bbQ$ with respect to the numerical equivalence relation, $H^\ast_{\mathrm{crys}}(X):=H^\ast_{\mathrm{crys}}(X/W(k))\otimes_{W(k)}K$ for the crystalline cohomology groups of $X$, and $K_\ast(X)_\bbQ$ for the $\bbQ$-linearized algebraic $K$-theory groups of $X$.

Given a prime number $l\neq p$, consider the associated cycle class map
\begin{equation}\label{eq:class-map}
\cZ^\ast(X)_{\bbQ_l} \too H^{2\ast}_{l\text{-}\mathrm{adic}}(X_{\overline{k}}, \bbQ_l(\ast))^{\mathrm{Gal}(\overline{k}/k)}
\end{equation}
with values in $l$-adic cohomology. In the sixties, Tate \cite{Tate} conjectured the following:

\vspace{0.2cm}

Conjecture $T^l(X)$: {\it The cycle class map \eqref{eq:class-map} is surjective}.

\vspace{0.2cm}

In the same vein, consider the cycle class map 
\begin{equation}\label{eq:cycle-crystalline}
\cZ^\ast(X)_{\bbQ_p} \too H^{2\ast}_{\mathrm{crys}}(X)(\ast)^{\mathrm{Fr}_p}
\end{equation}
with values in the $\bbQ_p$-vector subspace of those elements which are fixed by the crystalline Frobenius $\mathrm{Fr}_p$. Following \cite{Milne-AIM}, Tate's conjecture admits the $p$-version:

\vspace{0.2cm}

Conjecture $T^p(X)$: {\it The cycle class map \eqref{eq:cycle-crystalline} is surjective}.

\vspace{0.2cm}

In the eighties, Beilinson (see \cite[Conj.~50]{Handbook}) conjectured the following:

\vspace{0.2cm}

Conjecture $B(X)$: {\it The equality $\cZ^\ast(X)_\bbQ= \cZ^\ast(X)_\bbQ/_{\!\sim \mathrm{num}}$ holds}.

\vspace{0.2cm}

Also in the eighties, Parshin (see \cite[Conj.~51]{Handbook}) conjectured the following:

\vspace{0.2cm}

Conjecture $P(X)$: {\it We have $K_i(X)_\bbQ=0$ for every $i \geq1$}.

\vspace{0.2cm}

All the above conjectures hold whenever $\mathrm{dim}(X)\leq 1$; see \cite{Grayson, Harder, Handbook, KS, Tate-motives}. The conjectures $T^l(X)$ and $T^p(X)$ hold\footnote{As explained in \S\ref{sub:divisors} below, whenever $\mathrm{dim}(X)\leq 3$, we have $T^l(X) \Leftrightarrow T^p(X)$ (for every $l \neq p$).} moreover for abelian varieties of dimension $\leq 3$ and for $K3$-surfaces; see \cite{Milne-AIM, Totaro}. Besides these cases (and some other scattered cases), the aforementioned important conjectures remain wide open; consult Theorems \ref{thm:last} and \ref{thm:two} below for a proof of the Tate, Beilinson and Parshin conjectures in several new cases.


Recall from \S\ref{sub:dg} that a {\em differential graded (=dg) category} $\cA$ is a category enriched over dg $k$-vector spaces. As explained in \S\ref{sec:variants}, given a smooth proper dg category $\cA$ in the sense of Kontsevich, the conjectures of Tate, Beilinson and Parshin admit noncommutative analogues $T^l_{\mathrm{nc}}(\cA)$, $T^p_{\mathrm{nc}}(\cA)$, $B_{\mathrm{nc}}(\cA)$, and $P_{\mathrm{nc}}(\cA)$, respectively. Examples of smooth proper dg categories include finite dimensional $k$-algebras of finite global dimension $A$ as well as the canonical dg enhancement $\perf_\dg(X)$ of the category of perfect complexes $\perf(X)$ of smooth proper $k$-schemes $X$ (or, more generally, smooth proper algebraic stacks $\cX$); consult \cite{ICM-Keller, LO}.
\begin{theorem}\label{thm:Thomason}
Given a smooth projective $k$-scheme $X$, we have the equivalences:
\begin{eqnarray*}
T^l(X) \Leftrightarrow T^l_{\mathrm{nc}}(\perf_\dg(X)) && T^p(X) \Leftrightarrow T^p_{\mathrm{nc}}(\perf_\dg(X)) \\
B(X) \Leftrightarrow B_{\mathrm{nc}}(\perf_\dg(X))&& P(X) \Leftrightarrow P_{\mathrm{nc}}(\perf_\dg(X))\,.
\end{eqnarray*}
\end{theorem}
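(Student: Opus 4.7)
The plan is to treat each of the four equivalences separately, in each case identifying the classical invariant of $X$ with an appropriate grading of the noncommutative invariant of $\cA := \perf_\dg(X)$. The overarching machine is the dictionary between smooth projective schemes and smooth proper dg categories at the level of the standard motivic realizations (algebraic $K$-theory, $l$-adic/crystalline cohomology, and numerical motives).

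For Parshin, the equivalence is essentially immediate. Thomason--Trobaugh gives a canonical isomorphism $K_\ast(\perf_\dg(X)) \cong K_\ast(X)$, so after $\bbQ$-linearization the vanishing $K_i(X)_\bbQ = 0$ for $i \geq 1$ is literally the same statement as $K_i(\cA)_\bbQ = 0$ for $i \geq 1$. For Beilinson, I would invoke the Grothendieck--Riemann--Roch isomorphism $K_0(X)_\bbQ \cong \cZ^\ast(X)_\bbQ$ together with the fact that the noncommutative numerical equivalence on $K_0(\cA)_\bbQ$, defined via the Euler bilinear form $(M,N) \mapsto \chi(\REnd(M,N))$, corresponds under GRR to the classical intersection pairing modulo numerically trivial corrections. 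This identifies the noncommutative quotient with $\cZ^\ast(X)_\bbQ/_{\!\sim \mathrm{num}}$, yielding $B(X) \Leftrightarrow B_{\mathrm{nc}}(\cA)$.

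For the two Tate equivalences the central input is a Chern-character-type decomposition of the noncommutative realization applied to $\cA = \perf_\dg(X)$. Concretely, one needs natural isomorphisms identifying the noncommutative $l$-adic (respectively crystalline) cohomology of $\cA$ with the total direct sum $\bigoplus_{\ast} H^{2\ast}_{l\text{-}\mathrm{adic}}(X_{\overline{k}}, \bbQ_l(\ast))^{\mathrm{Gal}(\overline{k}/k)}$ (respectively $\bigoplus_{\ast} H^{2\ast}_{\mathrm{crys}}(X)(\ast)^{\mathrm{Fr}_p}$), under which the noncommutative cycle class map is intertwined with the direct sum over $\ast$ of the classical maps \eqref{eq:class-map} and \eqref{eq:cycle-crystalline}. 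Since surjectivity of a direct sum of maps between direct sums is equivalent to surjectivity of each summand, this forces $T^l(X) \Leftrightarrow T^l_{\mathrm{nc}}(\cA)$ and $T^p(X) \Leftrightarrow T^p_{\mathrm{nc}}(\cA)$.

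The technically delicate step — and where I expect the real work to lie — is establishing the weight-graded comparison for the Tate equivalences: verifying that the noncommutative realizations of $\perf_\dg(X)$ really do split as a direct sum of classical cohomologies indexed by codimension, and that this splitting is compatible with the cycle class map. I would proceed via the orbit-category construction that relates the category of noncommutative Chow motives to the classical category of Chow motives obtained by formally inverting the Tate twist $\bbQ(1)[2]$, together with a careful tracking of the Hochschild--Kostant--Rosenberg splitting through the $l$-adic and crystalline realization functors. The smooth projective hypothesis is essential here, as it guarantees degeneration of the relevant spectral sequences after rationalization, which is what makes the direct sum decomposition clean.
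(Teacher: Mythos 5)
Your treatment of the Parshin and Beilinson parts is essentially the paper's own argument: Thomason--Trobaugh gives $K_\ast(\perf_\dg(X))\simeq K_\ast(X)$, and the $\sqrt{\mathrm{Td}}$-twisted Chern character $[\cF]\mapsto \mathrm{ch}(\cF)\cdot\sqrt{\mathrm{Td}}_X$ together with Hirzebruch--Riemann--Roch identifies the Euler pairing on $K_0(X)_\bbQ$ with the intersection pairing on $\cZ^\ast(X)_\bbQ$ (exactly, not merely ``modulo numerically trivial corrections''), so the numerical quotients match. Your crystalline plan is also close in spirit to the paper's proof of $T^p(X)\Leftrightarrow T^p_{\mathrm{nc}}(\perf_\dg(X))$: there one first replaces \eqref{eq:cycle-crystalline} by its base change along $\bbQ_p\to K$ (a step you elide but which is routine), and then quotes the comparison $TP_0(\perf_\dg(X))_{1/p}\simeq \bigoplus_\ast H^{2\ast}_{\mathrm{crys}}(X)$ together with the identification of $\varphi^n$ with $\frac{1}{q^\ast}\mathrm{Fr}_q$ from earlier work, rather than re-deriving it through an HKR/orbit-category analysis; your heavier route would need those degeneration and compatibility claims to be proved, not just asserted, but the strategy is the same.

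The genuine gap is in the $l$-adic case. The conjecture $T^l_{\mathrm{nc}}(\cA)$ is not formulated as surjectivity of a cycle class map into some ``noncommutative $l$-adic cohomology''; it is the vanishing of the groups $\Hom(\bbZ(l^\infty),\pi_{-1}L_{KU}K(\cA\otimes_{\bbF_q}\bbF_{q^m}))$ for all $m\geq 1$. No $l$-adic realization of smooth proper dg categories with the properties your plan requires is available (this is precisely why the noncommutative conjecture is stated through $K(1)$-local algebraic $K$-theory), and even granting such a realization, the equivalence between the vanishing of these Tate modules and the surjectivity of \eqref{eq:class-map} is exactly Thomason's theorem, which your proposal neither invokes nor reproves --- and it is the entire content of this part of the statement. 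The paper's proof is short precisely because of this input: Thomason's finiteness criterion identifies $T^l(X)$ with the vanishing of $\Hom(\bbZ(l^\infty),\pi_{-1}L_{KU}K(X\times_{\bbF_q}\bbF_{q^m}))$, and the Morita equivalence $\perf_\dg(X\times_{\bbF_q}\bbF_{q^m})\simeq \perf_\dg(X)\otimes_{\bbF_q}\bbF_{q^m}$ then converts this into $T^l_{\mathrm{nc}}(\perf_\dg(X))$. Without Thomason's theorem (or an equivalent \'etale descent argument for $K(1)$-local $K$-theory), your plan for the $T^l$ equivalence does not get off the ground.
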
  
Theorem \ref{thm:Thomason} shows that the conjectures of Tate, Beilinson and Tate belong not only to the realm of algebraic geometry but also to the broad setting of smooth proper dg categories. Making use of this latter noncommutative viewpoint, we now prove that these celebrated conjectures are invariant under Homological Projective Duality (=HPD); for surveys on HPD we invite the reader to consult \cite{Kuznetsov-ICM,Thomas}.

Let $X$ be a smooth projective $k$-scheme equipped with a line bundle $\cL_X(1)$; we write $X \to \bbP(V)$ for the associated morphism, where $V:=H^0(X,\cL_X(1))^\ast$. Assume that the triangulated category $\perf(X)$ admits a Lefschetz decomposition $\langle \bbA_0, \bbA_1(1), \ldots, \bbA_{i-1}(i-1)\rangle$ with respect to $\cL_X(1)$ in the sense of \cite[Def.~4.1]{Kuznetsov-IHES}. Following \cite[Def.~6.1]{Kuznetsov-IHES}, let $Y$ be the HP-dual of $X$, $\cL_Y(1)$ the HP-dual line bundle, and $Y\to \bbP(V^\ast)$ the morphism associated to $\cL_Y(1)$. Given a linear subspace $L\subset V^\ast$, consider the linear sections $X_L:=X\times_{\bbP(V)}\bbP(L^\perp)$ and $Y_L:=Y \times_{\bbP(V^\ast)}\bbP(L)$. 
\begin{theorem}[HPD-invariance]\label{thm:main}
Let $X$ and $Y$ be as above. Assume that $X_L$ and $Y_L$ are smooth\footnote{The linear section $X_L$ is smooth if and only if the linear section $Y_L$ is smooth; see \cite[page~9]{Kuznetsov-ICM}.}, that $\mathrm{dim}(X_L)=\mathrm{dim}(X)-\mathrm{dim}(L)$ and $\mathrm{dim}(Y_L)=\mathrm{dim}(Y)-\mathrm{dim}(L^\perp)$, and that the conjecture $T^l_{\mathrm{nc}}(\bbA_0^\dg)$, resp. $T^p_{\mathrm{nc}}(\bbA_0^\dg)$, resp. $B_{\mathrm{nc}}(\bbA_0^\dg)$, resp. $P_{\mathrm{nc}}(\bbA_0^\dg)$, holds, where $\bbA^{\mathrm{dg}}_0$ stands for the dg enhancement of $\bbA_0$ induced from $\perf_\dg(X)$. Under these assumptions, we have the equivalence $T^l(X_L) \Leftrightarrow T^l(Y_L)$, resp. $T^p(X_L) \Leftrightarrow T^p(Y_L)$, resp. $B(X_L) \Leftrightarrow B(Y_L)$, resp. $P(X_L) \Leftrightarrow P(Y_L)$.
\end{theorem}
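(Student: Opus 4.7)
The plan is to translate the statement into noncommutative algebraic geometry via Theorem~\ref{thm:Thomason}, apply Kuznetsov's main theorem of HPD to obtain a pair of semi-orthogonal decompositions with a common dg piece, and then exploit the additivity of the four relevant invariants under semi-orthogonal decompositions of smooth proper dg categories. Theorem~\ref{thm:Thomason} reduces each claimed equivalence to one between the corresponding noncommutative conjectures for $\perf_\dg(X_L)$ and $\perf_\dg(Y_L)$. Under the expected-dimension hypotheses on $X_L$ and $Y_L$, the main HPD theorem then furnishes a dg category $\cC_L^\dg$ and semi-orthogonal decompositions
\begin{align*}
\perf_\dg(X_L) &= \langle\, \cC_L^\dg,\ \bbA_{\dim L}^\dg(1),\ \ldots,\ \bbA_{i-1}^\dg(i-\dim L)\,\rangle,\\
\perf_\dg(Y_L) &= \langle\, \bbB_{j-1}^\dg(\ast),\ \ldots,\ \bbB_{\dim L^\perp}^\dg(\ast),\ \cC_L^\dg\,\rangle,
\end{align*}
where $\bbB_s$ are the Lefschetz components of the HP-dual $Y$ and the two sides share the common summand $\cC_L^\dg$; the Lefschetz twists are autoequivalences and hence do not alter the Morita class of each Lefschetz piece.

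Each of the four noncommutative conjectures $T^l_{\mathrm{nc}}$, $T^p_{\mathrm{nc}}$, $B_{\mathrm{nc}}$, $P_{\mathrm{nc}}$ is additive in the sense that it holds for a smooth proper dg category if and only if it holds on every semi-orthogonal summand, because the underlying invariants—rational algebraic $K$-theory, noncommutative Chow versus numerical motives, and noncommutative $l$-adic and crystalline cohomology—split additively over s.o.d.'s of smooth proper dg categories. Applying this principle to the two decompositions above, the problem collapses onto the common piece $\cC_L^\dg$ once one knows that each Lefschetz piece $\bbA_s^\dg$ (for $s\geq\dim L$) and $\bbB_s^\dg$ (for $s\geq\dim L^\perp$) satisfies the relevant conjecture. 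For the $\bbA_s^\dg$, the nested chain $\bbA_0\supseteq\bbA_1\supseteq\cdots$ of admissible subcategories yields s.o.d.'s $\bbA_0^\dg = \langle (\bbA_s^\dg)^\perp,\bbA_s^\dg\rangle$, so additivity propagates the hypothesis on $\bbA_0^\dg$ to every $\bbA_s^\dg$; for the $\bbB_s^\dg$, the same reduction applies once one invokes Kuznetsov's identification of the HP-dual Lefschetz base $\bbB_0^\dg$ with $\bbA_0^\dg$ at the noncommutative level. Both chains thus reduce each desired equivalence to a single tautology on $\cC_L^\dg$.

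The main obstacle is the additivity step: Parshin's conjecture is straightforward since $K$-theory splits on s.o.d.'s by Waldhausen/Keller, but the analogous compatibility for $B$, $T^l$, $T^p$ demands a noncommutative Chow/numerical/$l$-adic/crystalline framework in which the cycle-class, numerical-equivalence, and Frobenius/Galois-fixed-point constructions are functorial with respect to the inclusion and projection functors of any s.o.d.\ of smooth proper dg categories. A secondary subtlety is the identification of the HP-dual base $\bbB_0^\dg$ with $\bbA_0^\dg$ as noncommutative motives, which is required to transport the single hypothesis on $\bbA_0^\dg$ across HP-duality.
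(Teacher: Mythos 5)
Your proposal follows essentially the same route as the paper: reduce to the noncommutative conjectures via Theorem \ref{thm:Thomason}, apply Kuznetsov's HPD decompositions with the common piece $\bbC_L$, and use additivity of the four invariants over semi-orthogonal decompositions to collapse everything onto $\bbC_L$, propagating the hypothesis on $\bbA_0^\dg$ to the remaining Lefschetz pieces. The only cosmetic differences are that the paper propagates the hypothesis to the dual components $\bbB_r$ via Kuznetsov's primitive subcategories $\mathfrak{a}_s$ (each $\bbB_r=\langle\mathfrak{a}_0,\ldots,\mathfrak{a}_{\mathrm{dim}(V)-r-2}\rangle$, with every $\mathfrak{a}_s$ admissible in $\bbA_0$) rather than through an identification $\bbB_0^\dg\simeq\bbA_0^\dg$, which can fail in edge cases, and that it records explicitly that the two dg enhancements of $\bbC_L$ coming from $X_L$ and from $Y_L$ are Morita equivalent because the comparison functor is of Fourier--Mukai type.
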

\begin{remark}\label{rk:singular}
\begin{itemize}
\item[(i)] Given a {\em generic} subspace $L \subset V^\ast$, the sections $X_L$ and $Y_L$ are smooth, and $\mathrm{dim}(X_L)=\mathrm{dim}(X)-\mathrm{dim}(L)$ and $\mathrm{dim}(Y_L)=\mathrm{dim}(Y)-\mathrm{dim}(L^\perp)$. 
\item[(ii)] The conjectures $T^l_{\mathrm{nc}}(\bbA_0^{\mathrm{dg}})$, $T^p_{\mathrm{nc}}(\bbA_0^{\mathrm{dg}})$, $B_{\mathrm{nc}}(\bbA_0^{\mathrm{dg}})$, and $P_{\mathrm{nc}}(\bbA_0^{\mathrm{dg}})$, hold, in particular, whenever the triangulated category $\bbA_0$ admits a full exceptional collection.
\item[(iii)] Theorem \ref{thm:main} holds more generally when $Y$ is singular. In this case we need to replace $Y$ by a noncommutative resolution of singularities $\perf_\dg(Y;\cF)$, where $\cF$ stands for a certain sheaf of noncommutative algebras over $Y$ (consult \cite[\S2.4]{Kuznetsov-ICM} for details), and conjecture $T^l(Y)$, resp. $T^p(Y)$, resp. $B(Y)$, resp. $P(Y)$, by its noncommutative analogue $T^l_{\mathrm{nc}}(\perf_\dg(Y;\cF))$, resp. $T^p_{\mathrm{nc}}(\perf_\dg(Y;\cF))$, resp. $B_{\mathrm{nc}}(\perf_\dg(Y;\cF))$, resp. $P_{\mathrm{nc}}(\perf_\dg(Y;\cF))$.
\end{itemize}
\end{remark}
To the best of the author's knowledge, Theorem \ref{thm:main} is new in the literature. In what follows, we illustrate its strength in the case of two important HP-dualities.
\subsection*{Determinantal duality}
Let $U_1$ and $U_2$ be two $k$-vector spaces of dimensions $d_1$ and $d_2$, respectively, with $d_1\leq d_2$, $V:=U_1 \otimes U_2$, and $0 < r< d_1$ an integer. 

Consider the determinantal variety $\cZ^r_{d_1, d_2}\subset \bbP(V)$ defined as the locus of those matrices $U_2 \to U_1^\ast$ with rank $\leq r$. Recall that the determinantal varieties with $r=1$ are the classical Segre varieties. For example, $\cZ^1_{2,2}\subset \bbP^3$ is the quadric surface defined as the zero locus of the $2\times 2$ minor $v_0v_3 - v_1 v_2$. In contrast with the Segre varieties, the determinantal varieties $\cZ^r_{d_1, d_2}$, with $r\geq 2$, are not smooth. The singular locus of $\cZ^r_{d_1, d_2}$ consists of those matrices $U_2 \to U_1^\ast$ with rank $<r$, \ie it agrees with the closed subvariety $\cZ^{r-1}_{d_1, d_2}$. Nevertheless, it is well-known that $\cZ^r_{d_1, d_2}$ admits a canonical Springer resolution of singularities $\cX^r_{d_1, d_2} \to \cZ^r_{d_1, d_2}$, which comes equipped with a projection $q\colon \cX^r_{d_1, d_2} \to \mathrm{Gr}(r, U_1)$ to the Grassmannian of $r$-dimensional subspaces in $U_1$. Following \cite[\S3.3]{Marcello}, the category $\perf(X)$, with $X:=\cX^r_{d_1, d_2}$, admits a Lefschetz decomposition $\langle \bbA_0, \bbA_1(1), \ldots, \bbA_{d_2 r - 1}(d_2 r -1)\rangle$, where $\bbA_0=\bbA_1= \cdots = \bbA_{d_2 r -1}= q^\ast(\perf(\mathrm{Gr}(r,U_1)))\simeq \perf(\mathrm{Gr}(r,U_1))$. 
\begin{proposition}\label{prop:key}
The following conjectures hold: 
$$T^l_{\mathrm{nc}}(\bbA_0^{\mathrm{dg}})\quad T^p_{\mathrm{nc}}(\bbA_0^{\mathrm{dg}}) \quad B_{\mathrm{nc}}(\bbA_0^{\mathrm{dg}}) \quad P_{\mathrm{nc}}(\bbA_0^{\mathrm{dg}})\,.$$
\end{proposition}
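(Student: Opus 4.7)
The plan is to apply Remark \ref{rk:singular}(ii), which asserts that the four noncommutative conjectures hold whenever the underlying triangulated category admits a full exceptional collection. The entire argument thus reduces to exhibiting such a collection on $\bbA_0$.

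As recalled in the excerpt (following Bernardara), we have $\bbA_0 = q^\ast(\perf(\Gr(r, U_1)))$, and the Springer projection $q\colon \cX^r_{d_1, d_2} \to \Gr(r, U_1)$ satisfies $Rq_\ast \cO \simeq \cO$, so by the projection formula the pullback $q^\ast$ is fully faithful. Consequently $\bbA_0 \simeq \perf(\Gr(r, U_1))$, and this lifts to a Morita equivalence of dg enhancements $\bbA_0^\dg \simeq \perf_\dg(\Gr(r, U_1))$. The next step is to invoke Kapranov's classical theorem, which supplies a full strong exceptional collection on $\perf(\Gr(r, U_1))$ consisting of the Schur functors $\Sigma^\lambda \cU$ applied to the tautological subbundle $\cU$, indexed by Young diagrams $\lambda$ fitting in an $r \times (d_1 - r)$ rectangle. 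Transporting this collection across the above Morita equivalence yields the required full exceptional collection on $\bbA_0^\dg$, and Remark \ref{rk:singular}(ii) closes the argument.

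I do not foresee a serious obstacle here. The only point warranting mild care is checking that Kapranov's collection is full in the dg sense rather than merely at the triangulated level; but since its objects are ordinary coherent sheaves, their Ext-groups in $\perf_\dg(\Gr(r, U_1))$ agree with the derived Hom-groups in $\perf(\Gr(r, U_1))$, so exceptionality and fullness transfer without modification. Alternatively, one could bypass Remark \ref{rk:singular}(ii) altogether and argue directly: a smooth proper dg category with a full exceptional collection of length $n$ is, in the category of noncommutative motives, isomorphic to $\bigoplus_{i=1}^n \perf_\dg(\Spec k)$, so via Theorem \ref{thm:Thomason} the four conjectures reduce to the classical Tate, Beilinson and Parshin conjectures for $\Spec k$, all of which hold trivially (recall in particular that $K_i(\bbF_q)_\bbQ = 0$ for every $i \geq 1$ by Quillen).
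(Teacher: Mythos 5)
The reduction $\bbA_0^\dg \simeq \perf_\dg(\Gr(r,U_1))$ is fine, but the next step is where your argument breaks: you invoke Kapranov's theorem over $k=\bbF_q$. Kapranov's full (strong) exceptional collection of Schur functors $\Sigma^\lambda\cU$ is a characteristic-zero result; over a field of positive characteristic the required vanishing and the identities $\End(\Sigma^\lambda\cU)\simeq k$ are not available in general (the Schur modules are objects of modular representation theory, and the collection is not known to be, and in small characteristic need not be, exceptional). The paper is explicit about exactly this point: in the footnote preceding Corollary \ref{cor:last} it says that over $\bbF_q$ the reference \cite{Kapranov} must be \emph{replaced} by the tilting-bundle result of \cite[Thm.~1.3]{VdB}. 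So Remark \ref{rk:singular}(ii) cannot be applied as you propose, because you have not produced a full exceptional collection on $\bbA_0$; your ``alternative'' closing argument has the same gap, since it also presupposes such a collection.

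The paper's actual proof of Proposition \ref{prop:key} takes the route your argument avoids: by \cite[Thms.~1.3 and 1.7]{VdB}, $\perf_\dg(\Gr(r,U_1))$ is Morita equivalent to a finite-dimensional $k$-algebra $A$ of finite global dimension, so the four noncommutative conjectures for $\bbA_0^\dg$ become the conjectures $T^l_{\mathrm{nc}}(A)$, $T^p_{\mathrm{nc}}(A)$, $B_{\mathrm{nc}}(A)$, $P_{\mathrm{nc}}(A)$. These are then proved directly: since $k$ is perfect, \cite[Thm.~3.15]{Azumaya} lets one replace $A$ by its semisimple quotient $A/J(A)$ for every additive invariant involved; Artin--Wedderburn reduces to a product of division algebras over finite extensions $Z_i$ of $k$, and triviality of the Brauer group of a finite field reduces further to $Z_1\times\cdots\times Z_s$, where the conjectures hold because $\dim(Z_i)=0$. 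If you want to salvage your approach you would need a characteristic-free substitute for Kapranov (and the mere existence of a tilting bundle is not such a substitute, since its endomorphism algebra is quasi-hereditary rather than triangular with diagonal $k$'s); as it stands, the proposal has a genuine gap at its central step.
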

Dually, consider the variety $\cW^r_{d_1, d_2}\subset \bbP(V^\ast)$, defined as the locus of those matrices $U^\ast_2 \to U_1$ with corank $\geq r$, and the associated Springer resolutions of singularities $Y:=\cY^r_{d_1, d_2} \to \cW^r_{d_1, d_2}$. As proved\footnote{In \cite[Prop.~3.4 and Thm.~3.5]{Marcello} the authors worked over an algebraically closed field of characteristic zero. However, the same proof holds {\em mutatis mutandis} over $k=\bbF_q$. Simply replace the reference \cite{Kapranov} concerning the existence of a full strong exceptional collection on $\perf(\mathrm{Gr}(r, U_1))$ by the reference \cite[Thm.~1.3]{VdB} concerning the existence of a tilting bundle on $\perf(\mathrm{Gr}(r, U_1))$. The author is grateful to Marcello Bernardara for discussions concerning this issue.} in \cite[Prop.~3.4 and Thm.~3.5]{Marcello}, $X$ and $Y$ are HP-dual to each other. 

Given a generic linear subspace $L \subseteq V^\ast$, consider the smooth linear sections $X_L$ and $Y_L$; note that whenever $\bbP(L^\perp)$ does not intersects the singular locus of $\cZ^r_{d_1, d_2}$, we have $X_L=\bbP(L^\perp) \cap \cZ^r_{d_1, d_2}$. Theorem \ref{thm:main} yields the following result:
\begin{corollary}\label{cor:last}
We have the following equivalences: 
$$
T^l(X_L)\Leftrightarrow T^l(Y_L) \quad T^p(X_L)\Leftrightarrow T^p(Y_L) \quad
B(X_L)\Leftrightarrow B(Y_L) \quad P(X_L)\Leftrightarrow P(Y_L)\,.
$$
\end{corollary}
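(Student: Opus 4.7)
The plan is to apply Theorem \ref{thm:main} directly: Corollary \ref{cor:last} is simply its specialization to the determinantal HP-duality recalled in the paragraphs preceding the statement. Hence the work reduces to checking that the three hypotheses of Theorem \ref{thm:main} hold for $X=\cX^r_{d_1,d_2}$ and $Y=\cY^r_{d_1,d_2}$, and then reading off the four equivalences.

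First, I would invoke \cite[Prop.~3.4 and Thm.~3.5]{Marcello} — with the characteristic $p$ tweak from the footnote to the excerpt, namely Van den Bergh's tilting bundle \cite[Thm.~1.3]{VdB} in place of Kapranov's exceptional collection on $\mathrm{Gr}(r,U_1)$ — to ensure that $X$ and $Y$, together with the natural line bundles inherited from the Plücker embeddings, form an HP-dual pair in the sense required by Theorem \ref{thm:main}. Next, since $L\subseteq V^\ast$ is chosen generic, Remark \ref{rk:singular}(i) supplies both the smoothness of $X_L$ and $Y_L$ and the expected dimension equalities $\dim(X_L)=\dim(X)-\dim(L)$ and $\dim(Y_L)=\dim(Y)-\dim(L^\perp)$.

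The remaining hypothesis is the validity of the four noncommutative conjectures for $\bbA_0^{\mathrm{dg}}$, where $\bbA_0\simeq \perf(\mathrm{Gr}(r,U_1))$ is the first block of the Lefschetz decomposition of $\perf(X)$ recalled in the excerpt. This is exactly the content of Proposition \ref{prop:key}, which I would simply cite. With all three hypotheses in place, Theorem \ref{thm:main} yields the four equivalences $T^l(X_L)\Leftrightarrow T^l(Y_L)$, $T^p(X_L)\Leftrightarrow T^p(Y_L)$, $B(X_L)\Leftrightarrow B(Y_L)$, and $P(X_L)\Leftrightarrow P(Y_L)$ simultaneously.

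I do not anticipate any serious obstacle at this stage of the paper: everything genuinely difficult has been packaged into Theorem \ref{thm:main} (the HPD transfer) and Proposition \ref{prop:key} (the noncommutative conjectures for the Grassmannian block). The closest thing to a subtlety is the characteristic $p$ replacement for Kapranov's collection, but this is disposed of by Van den Bergh's tilting result. If one wished to drop the genericity of $L$, then the main difficulty would become controlling the singular case of $X_L$ or $Y_L$ via the noncommutative resolutions of Remark \ref{rk:singular}(iii); fortunately, no such extension is needed for Corollary \ref{cor:last}.
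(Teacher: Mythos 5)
Your proposal is correct and matches the paper's treatment exactly: Corollary \ref{cor:last} is presented there as an immediate specialization of Theorem \ref{thm:main} to the determinantal HP-dual pair of \cite{Marcello} (with the characteristic-$p$ adjustment via \cite{VdB}), with the genericity of $L$ supplying smoothness and the dimension equalities as in Remark \ref{rk:singular}(i), and Proposition \ref{prop:key} supplying the hypothesis on $\bbA_0^{\mathrm{dg}}$. No further comment is needed.
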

By construction, $\mathrm{dim}(X)= r(d_1+ d_2 -r)-1$ and $\mathrm{dim}(Y)= r(d_1-d_2-r) + d_1 d_2 -1$. Consequently, we have $\mathrm{dim}(X_L)= r(d_1+d_2-r) -1 - \mathrm{dim}(L)$ and $\mathrm{dim}(Y_L)= r(d_1 - d_2 - r) -1 +\mathrm{dim}(L)$. Since the Tate, Beilinson and Parshin conjectures hold in dimensions $\leq 1$, we hence obtain from Corollary \ref{cor:last} the following result:
\begin{theorem}[Linear sections of determinantal varieties]\label{thm:last}
Let $X_L$ and $Y_L$ be smooth linear sections of determinantal varieties as in Corollary \ref{cor:last}.
\begin{itemize}
\item[(i)] Whenever $r(d_1 + d_2 -r) -1 - \mathrm{dim}(L)\leq 1$, the conjectures $T^l(Y_L)$, $T^p(Y_L)$, $B(Y_L)$, and $P(Y_L)$, hold.
\item[(ii)] Whenever $r(d_1 -d_2 -r) -1 + \mathrm{dim}(L)\leq 1$, the conjectures $T^l(X_L)$, $T^p(X_L)$, $B(X_L)$, and $P(X_L)$, hold.
\end{itemize}
\end{theorem}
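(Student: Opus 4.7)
The plan is to combine two ingredients already in place: the explicit dimension formulas for the HP-dual linear sections together with the HPD-invariance equivalences from Corollary \ref{cor:last}, and the fact (recalled in the introduction) that the Tate, Beilinson and Parshin conjectures are unconditionally known for smooth projective $k$-schemes of dimension $\leq 1$.

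First, I would recall the dimension computations carried out just before the statement, namely
\[
\dim(X_L) = r(d_1+d_2-r) - 1 - \dim(L), \qquad \dim(Y_L) = r(d_1-d_2-r) - 1 + \dim(L).
\]
Under the hypothesis of part~(i), these give $\dim(X_L) \leq 1$, so by the classical results of Grayson, Harder, Kahn--Sujatha, Tate, and others (collected in the introduction, citing \cite{Grayson, Harder, Handbook, KS, Tate-motives}), the four conjectures $T^l(X_L)$, $T^p(X_L)$, $B(X_L)$, $P(X_L)$ all hold for the smooth projective $k$-scheme $X_L$. Symmetrically, under the hypothesis of part~(ii), $\dim(Y_L)\leq 1$ and the same low-dimensional results yield $T^l(Y_L)$, $T^p(Y_L)$, $B(Y_L)$, and $P(Y_L)$.

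Next, I would invoke Corollary \ref{cor:last}, which provides the four equivalences $T^l(X_L)\Leftrightarrow T^l(Y_L)$, $T^p(X_L)\Leftrightarrow T^p(Y_L)$, $B(X_L)\Leftrightarrow B(Y_L)$, and $P(X_L)\Leftrightarrow P(Y_L)$ for generic linear sections in the determinantal setting. Transporting the low-dimensional conclusion of the previous paragraph across these equivalences yields the four conjectures for $Y_L$ in case~(i) and for $X_L$ in case~(ii).

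The only thing to check is that the hypotheses of Corollary \ref{cor:last} are indeed satisfied in the stated ranges: genericity of $L$ (so that $X_L,Y_L$ are smooth with the expected dimensions, as noted in Remark \ref{rk:singular}(i)), and the validity of the noncommutative conjectures $T^l_{\mathrm{nc}}(\bbA_0^{\mathrm{dg}})$, $T^p_{\mathrm{nc}}(\bbA_0^{\mathrm{dg}})$, $B_{\mathrm{nc}}(\bbA_0^{\mathrm{dg}})$, $P_{\mathrm{nc}}(\bbA_0^{\mathrm{dg}})$. The latter is exactly Proposition \ref{prop:key}, and the former is assumed when we speak of \emph{smooth} linear sections in the sense of Corollary \ref{cor:last}. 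No further obstacle arises: the theorem is essentially the numerical shadow of the HPD-invariance Theorem \ref{thm:main}, asserting that whenever one of the two HP-dual sections is forced into the classically known range $\dim\leq 1$, the conjectures for its partner follow for free.
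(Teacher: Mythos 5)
Your proposal is correct and follows exactly the paper's argument: the dimension formulas force $\dim(X_L)\leq 1$ in case (i) (resp. $\dim(Y_L)\leq 1$ in case (ii)), the conjectures are known unconditionally in dimension $\leq 1$, and the equivalences of Corollary \ref{cor:last} (whose hypotheses hold thanks to Proposition \ref{prop:key} and the genericity of $L$) transport them to the HP-dual section. No gaps; this is the same route the paper takes.
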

To the best of the author's knowledge, Theorem \ref{thm:last} is new in the literature. It proves the Tate, Beilinson and Parshin conjectures in several new cases. Here are two families of examples: 
\begin{example}[Segre varieties]\label{ex:Segre}
Let $r=1$. Thanks to Theorem \ref{thm:last}(ii), whenever $d_1 - d_2 -2 +\mathrm{dim}(L) \leq 1$, the conjectures $T^l(X_L)$, $T^p(X_L)$, $B(X_L)$, and $P(X_L)$, hold. In all these cases, $X_L$ is a linear section of the Segre variety $\cZ^1_{d_1, d_2}$ and the dimension of $X_L$ is $2(d_2 - \mathrm{dim}(L))$ or $2(d_2 - \mathrm{dim}(L))+1$. Therefore, for example, by letting $d_2 \to \infty$ and by keeping $\mathrm{dim}(L)$ fixed, we obtain infinitely many new examples of smooth projective $k$-schemes $X_L$, of arbitrary high dimension, satisfying the Tate, Beilinson and Parshin conjectures.
\end{example}
\begin{subexample}
Let $r=1$, $d_1=4$, and $d_2=2$. In this particular case, the Segre variety $\cZ^1_{4, 2} \subset \bbP^7$ agrees with the rational normal $4$-fold scroll $S_{1,1,1,1}$; see \cite[Ex.~8.27]{Harris}. Choose a generic linear subspace $L \subset V^\ast$ of dimension $1$ such that the hyperplane $\bbP(L^\perp)\subset \bbP^7$ does not contain any $3$-plane of the rulling of $S_{1,1,1,1}$. By combining Example \ref{ex:Segre} with \cite[Prop.~2.5]{Faenzi}, we conclude that the rational normal $3$-fold scroll $X_L=S_{1,1,2}$ satisfies the Tate, Beilinson and Parshin conjectures.
\end{subexample}
\begin{example}[Square matrices]\label{ex:square}
Let $d_1=d_2=d$. Thanks to Theorem \ref{thm:last}(ii), whenever $-r^2-1 + \mathrm{dim}(L)\leq 1$, the conjectures $T^l(X_L), T^p(X_L), B(X_L), P(X_L)$ hold. In all these cases the dimension of $X_L$ is $2(dr - \mathrm{dim}(L))$ or $2(dr - \mathrm{dim}(L))+1$. Therefore, for example, by letting $d\to \infty$ and by keeping $r$ and $\mathrm{dim}(L)$ fixed, we obtain infinitely many new examples of smooth projective $k$-schemes $X_L$, of arbitrary high dimension, satisfying the Tate, Beilinson and Parshin conjectures.
\end{example}
\begin{subexample}
Let $d_1=d_2=3$ and $r=2$. In this particular case, the determinantal variety $\cZ^2_{3,3}\subset \bbP^8$ has dimension $7$ and its singular locus is the $4$-dimensional Segre variety $\cZ^1_{3,3} \subset \cZ^2_{3,3}$. Given a generic linear subspace $L\subset V^\ast$ of dimension $5$, the associated smooth linear section $X_L$ is $2$-dimensional and, thanks to Example \ref{ex:square}, it satisfies the Tate, Beilinson and Parshin conjectures. Note that since $\mathrm{codim}(L^\perp)=5>4 = \mathrm{dim}(\cZ^1_{3,3})$, the subspace $\bbP(L^\perp)\subset \bbP^8$ does not intersects the singular locus $\cZ^1_{3,3}$ of $\cZ^2_{3,3}$. Therefore, for all the above choices of $L$, the associated surface $X_L$ is a linear section of the determinantal variety $\cZ^2_{3,3}$.
\end{subexample}
\subsection*{Veronese-Clifford duality}
Let $W$ be a $k$-vector space of dimension $d$ and $X$ the associated projective space $\bbP(W)$ equipped with the double Veronese embedding $\bbP(W) \to \bbP(S^2W), [w] \mapsto [w\otimes w]$. Consider the Beilinson's full exceptional collection $\perf(X)=\langle \cO_X(-1), \cO_X, \cO_X(1), \ldots, \cO_X(d-2)\rangle$ (see \cite{Beilinson}) and set $i:=\lceil d/2 \rceil$ and 
\begin{eqnarray*}
\bbA_0=\bbA_1=\cdots = \bbA_{i-2}:=\langle \cO_X(-1), \cO_X\rangle & \bbA_{i-1}:=\begin{cases}
\langle \cO_X(-1), \cO_X\rangle & \text{if}\,\,d=2i\\
\langle \cO_X(-1)\rangle & \text{if}\,\,d=2i-1.
\end{cases} &
\end{eqnarray*}
Under these notations, the category $\perf(X)$ admits the Lefschetz decomposition $\langle \bbA_0, \bbA_1(1), \ldots, \bbA_{i-1}(i-1)\rangle$ with respect to the line bundle $\cL_X(1)=\cO_X(2)$. Remark \ref{rk:singular}(ii) hence implies the conjectures $T^l_{\mathrm{nc}}(\bbA_0^{\mathrm{dg}})$, $T^p_{\mathrm{nc}}(\bbA_0^{\mathrm{dg}})$, $B_{\mathrm{nc}}(\bbA_0^{\mathrm{dg}})$, and $P_{\mathrm{nc}}(\bbA_0^{\mathrm{dg}})$.

Let $\cH:=X\times_{\bbP(S^2W)} \cQ \subset X \times \bbP(S^2 W^\ast)$ be the universal hyperplane section, where $\cQ \subset \bbP(S^2 W) \times \bbP(S^2W^\ast)$ stands for the incidence quadric. By construction, the projection $q\colon \cH \to \bbP(S^2 W^\ast)$ is a flat quadric fibration. As proved in \cite[Thm.~5.4]{Kuznetsov-quadrics} (see also \cite[Thm.~2.3.6]{Bernardara}) the HP-dual $Y$ of $X$ is given by $\perf_\dg(\bbP(S^2 W^\ast); \mathcal{C}l_0(q))$ (see Remark \ref{rk:singular}(iii)), where $\mathcal{C}l_0(q)$ stands for the sheaf of even Clifford algebras associated to $q$. 

Let $L \subset S^2W^\ast$ be a generic linear subspace. On the one hand, $X_L$ corresponds to the smooth complete intersection of the $\mathrm{dim}(L)$ quadric hypersurfaces in $\bbP(W)$ parametrized by $L$. On the other hand, $Y_L$ is given by $\perf_\dg(\bbP(L); \mathcal{C}l_0(q)_{|L})$. Theorem \ref{thm:main} yields the following result:
\begin{corollary}\label{cor:main}
We have the following equivalences:
\begin{eqnarray*}
T^l(X_L) \Leftrightarrow T^l_{\mathrm{nc}}(\perf_\dg(\bbP(L); \mathcal{C}l_0(q)_{|L})) && T^p(X_L) \Leftrightarrow T^p_{\mathrm{nc}}(\perf_\dg(\bbP(L); \mathcal{C}l_0(q)_{|L})) \\
B(X_L) \Leftrightarrow B_{\mathrm{nc}}(\perf_\dg(\bbP(L); \mathcal{C}l_0(q)_{|L})) && P(X_L) \Leftrightarrow P_{\mathrm{nc}}(\perf_\dg(\bbP(L); \mathcal{C}l_0(q)_{|L}))\,.
\end{eqnarray*}
\end{corollary}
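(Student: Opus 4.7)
The plan is to obtain Corollary \ref{cor:main} as a direct application of the HPD-invariance result (Theorem \ref{thm:main}), in the noncommutative generalization given in Remark \ref{rk:singular}(iii). All the geometric data, namely $X=\bbP(W)$ with the double Veronese polarization, its noncommutative HP-dual $Y=\perf_\dg(\bbP(S^2 W^\ast);\mathcal{C}l_0(q))$, and the Lefschetz decomposition with $\bbA_0=\langle \cO_X(-1),\cO_X\rangle$, have already been set up in the paragraphs preceding the statement. Thus the proof amounts to checking the three hypotheses of Theorem \ref{thm:main}.

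First I would verify the noncommutative conjectures for $\bbA_0^{\mathrm{dg}}$. By construction, $\bbA_0$ is generated by the full exceptional collection $(\cO_X(-1),\cO_X)$, so Remark \ref{rk:singular}(ii) immediately yields $T^l_{\mathrm{nc}}(\bbA_0^{\mathrm{dg}})$, $T^p_{\mathrm{nc}}(\bbA_0^{\mathrm{dg}})$, $B_{\mathrm{nc}}(\bbA_0^{\mathrm{dg}})$, and $P_{\mathrm{nc}}(\bbA_0^{\mathrm{dg}})$. Next, since $L\subset S^2W^\ast$ is generic, Remark \ref{rk:singular}(i) guarantees that both linear sections $X_L$ and $Y_L$ are smooth and of the expected dimensions $\mathrm{dim}(X)-\mathrm{dim}(L)$ and $\mathrm{dim}(Y)-\mathrm{dim}(L^\perp)$; note that $X_L$ is the smooth complete intersection in $\bbP(W)$ of the $\mathrm{dim}(L)$ quadrics parametrized by $L$.

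With these inputs in hand, Theorem \ref{thm:main}, in the form of Remark \ref{rk:singular}(iii), delivers the four desired equivalences. Because $X_L$ is an honest smooth projective $k$-scheme, the $X$-side of each equivalence is the classical conjecture $T^l(X_L)$, resp. $T^p(X_L)$, resp. $B(X_L)$, resp. $P(X_L)$. Because $Y$ is only defined noncommutatively via $\mathcal{C}l_0(q)$, the $Y$-side is the noncommutative analogue $T^l_{\mathrm{nc}}(\perf_\dg(\bbP(L);\mathcal{C}l_0(q)_{|L}))$, resp. its $T^p_{\mathrm{nc}}$, $B_{\mathrm{nc}}$, $P_{\mathrm{nc}}$ counterparts, which matches the formulation of the corollary.

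Strictly speaking, the only substantive point to confirm is that the noncommutative ``linear section'' $Y_L$ appearing in the generalized form of Theorem \ref{thm:main} coincides with $\perf_\dg(\bbP(L);\mathcal{C}l_0(q)_{|L})$; this is exactly the compatibility with linear base change built into the Veronese--Clifford HP-duality of \cite[Thm.~5.4]{Kuznetsov-quadrics} and \cite[Thm.~2.3.6]{Bernardara}, so no genuine obstacle arises. Consequently, there is no hard step hidden in the argument: the entire content is encapsulated in Theorem \ref{thm:main}, and Corollary \ref{cor:main} is a clean specialization to the Veronese--Clifford setting.
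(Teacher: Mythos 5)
Your proposal is correct and follows the paper's own route exactly: the hypotheses of Theorem \ref{thm:main} are verified from the preceding setup ($\bbA_0=\langle \cO_X(-1),\cO_X\rangle$ has a full exceptional collection, so Remark \ref{rk:singular}(ii) gives the noncommutative conjectures for $\bbA_0^{\dg}$, and genericity of $L$ gives smooth sections of the expected dimensions), after which the corollary is the specialization of Theorem \ref{thm:main} in the noncommutative form of Remark \ref{rk:singular}(iii), with $Y_L=\perf_\dg(\bbP(L);\mathcal{C}l_0(q)_{|L})$ supplied by the Veronese--Clifford duality of Kuznetsov and Auel--Bernardara--Bolognesi. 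Nothing essential is missing.
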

Recall that the space of quadrics $\bbP(S^2 W^\ast)$ comes equipped with a canonical filtration $\Delta_d \subset \cdots \subset \Delta_2 \subset \Delta_1 \subset \bbP(S^2W^\ast)$, where $\Delta_i$ stands for the closed subscheme of those singular quadrics of corank $\geq i$. 
\begin{theorem}[Intersection of two quadrics]\label{thm:two}
Let $X_L$ be as in Corollary \ref{cor:main}. Assume that $\mathrm{dim}(L)=2$, that $\bbP(L) \cap \Delta_2 =\emptyset$, and that $p\neq 2$ when $d$ is odd. Under these assumptions, the conjectures $T^l(X_L)$, $T^p(X_L)$, $B(X_L)$, and $P(X_L)$, hold.
\end{theorem}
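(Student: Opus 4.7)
The plan is to combine Corollary~\ref{cor:main} with the classical description of the even Clifford algebra of a pencil of quadrics in terms of a hyperelliptic curve. By Corollary~\ref{cor:main}, it suffices to establish the four noncommutative conjectures for the dg category $\perf_\dg(\bbP(L); \mathcal{C}l_0(q)_{|L})$. Since $\mathrm{dim}(L)=2$, the base $\bbP(L)$ is a projective line, and the hypothesis $\bbP(L)\cap \Delta_2=\emptyset$ says that the associated pencil of quadrics has only simple (corank one) degenerations at $d$ distinct points.

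The key step is to invoke the Bondal--Orlov--Kuznetsov description of such a Clifford algebra on $\bbP^1$. Under our hypotheses (the restriction $p\neq 2$ when $d$ is odd is precisely what is needed for the Clifford construction to be well-behaved in odd-dimensional ambient spaces), I would establish a Morita equivalence of dg categories
\[ \perf_\dg(\bbP(L); \mathcal{C}l_0(q)_{|L}) \simeq \perf_\dg(C;\alpha),\]
where $C$ is a smooth projective curve --- for $d$ even, the hyperelliptic double cover of $\bbP(L)\cong \bbP^1$ branched at the $d$ discriminant points of the pencil --- and $\alpha\in \Br(C)$ is an associated Brauer class of finite order. Since $\mathrm{dim}(C)=1$, the classical conjectures $T^l(C)$, $T^p(C)$, $B(C)$, and $P(C)$ hold by the results recalled in the introduction, and by Theorem~\ref{thm:Thomason} their noncommutative counterparts hold for $\perf_\dg(C)$.

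It remains to pass from $\perf_\dg(C)$ to the twisted version $\perf_\dg(C;\alpha)$. Since $\alpha$ has finite order, a Morita-theoretic argument (applied to a power of $\alpha$ that trivializes it) should show that the relevant rational noncommutative invariants of $\perf_\dg(C;\alpha)$ coincide with those of $\perf_\dg(C)$, whence the four noncommutative conjectures transfer. The main obstacle is the precise form of the identification in the second step: in characteristic zero this is Kuznetsov's theorem, but verifying its dg enhancement and its validity over $\bbF_q$ (in particular the role of the restriction $p\neq 2$ when $d$ is odd) is the technical heart of the argument; a secondary subtlety is ensuring that the Brauer-twist invariance in the last step holds for all four of the conjectures simultaneously.
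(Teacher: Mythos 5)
Your strategy for $d$ even is essentially the paper's: the curve you call $C$ is the discriminant double cover $\widetilde{\bbP}(L)\to\bbP(L)$ obtained from the center of $\mathcal{C}l_0(q)_{|L}$, over which the even Clifford algebra becomes a sheaf of Azumaya algebras $\cF$ of rank $2^{(d/2)-1}$, and Kuznetsov's equivalence $\perf(\bbP(L);\mathcal{C}l_0(q)_{|L})\simeq\perf(\widetilde{\bbP}(L);\cF)$ reduces everything, via Corollary~\ref{cor:main}, to a twisted curve. However, your last step has a genuine gap. The twist-removal argument you sketch (using that $\alpha$ has finite order and comparing invariants after passing to a power of $\alpha$, i.e.\ after inverting the order/rank) only identifies the relevant invariants away from $2$, and therefore yields $T^l(X_L)$ only for $l\neq 2$ --- exactly the restriction that appears in Theorem~\ref{prop:2} and Remark~\ref{rk:Azumaya} --- which falls short of the full statement of Theorem~\ref{thm:two} when $p\neq 2$. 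The point you miss is that over $k=\bbF_q$ the Brauer group of a smooth projective curve is trivial (\cite[page~109]{Milne}), so $\alpha=0$ and $\perf_\dg(\widetilde{\bbP}(L);\cF)$ is Morita equivalent to $\perf_\dg(\widetilde{\bbP}(L))$ on the nose; this gives all four conjectures, including $T^l$ for every $l\neq p$, since the conjectures hold for curves. Your sketch never uses that the base field is finite at this step, and without that input the argument does not close.

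The second gap is the case $d$ odd. There is no description of $\perf(\bbP(L);\mathcal{C}l_0(q)_{|L})$ as a Brauer-twisted smooth curve in that case; the correct statement (\cite[\S3.6]{Kuznetsov-quadrics}, used in the paper) replaces the double cover by the discriminant \emph{root stack} $\widehat{\bbP}(L)$, a smooth Deligne--Mumford stack (this is where the hypothesis $p\neq 2$ actually enters), over which $\mathcal{C}l_0(q)_{|L}$ is Azumaya. One then needs the additional ingredient $\perf(\widehat{\bbP}(L))=\langle\perf(D),\perf(\bbP(L))\rangle$ from \cite[Thm.~1.6]{Ueda}, combined with the additivity argument from the proof of Theorem~\ref{thm:main}, to reduce the four noncommutative conjectures for $\perf_\dg(\widehat{\bbP}(L))$ to the $0$-dimensional scheme $D=\bbP(L)\cap\Delta_1$ and the curve $\bbP(L)$, plus once more the vanishing of the Brauer group to drop the twist. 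Your proposal flags the odd case as the ``technical heart'' but offers no substitute for this stacky argument, so as written it only addresses $d$ even (and there only for $l\neq 2$, by the previous paragraph). A minor further point: smoothness of $\widetilde{\bbP}(L)$ requires the divisor $D$ to be smooth (reduced), which comes from the genericity of $L$, not from the assumption $\bbP(L)\cap\Delta_2=\emptyset$ alone.
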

\begin{remark}[Intersection of even-dimensional quadrics]
In the case of an intersection $X_L$ of (several) even-dimensional quadrics, we prove in Theorem \ref{prop:2} below that the conjectures $T^l(X_L)$ (for every $l\neq 2$), $T^p(X_L)$, $B(X_L)$, and $P(X_L)$, are equivalent to the corresponding conjectures for the discriminant $2$-fold cover of the projective space $\bbP(L)$. To the best of the author's knowledge, this (geometric) result is new in the literature.
\end{remark}
The proof of Theorem \ref{thm:two} is based on the solution of the corresponding noncommutative conjectures of Corollary \ref{cor:main}; consult \S\ref{sec:proof} for details. In what concerns Tate's conjecture, an alternative (geometric) proof, based on the notion of variety of maximal planes, was obtained by Reid\footnote{Reid also assumed in {\em loc. cit.} that $\bbP(L) \cap \Delta_2 =\emptyset$; see \cite[Def.~1.9]{Reid}.} in the early seventies; see \cite[Thms.~3.14 and 4.14]{Reid}. Therein, Reid proved the Hodge conjecture but, as Kahn kindly informed me, a similar proof works for the Tate conjecture. In what concerns the Beilinson and Parshin conjectures, Theorem \ref{thm:two} is new in the literature.
\subsection*{Strong form of the Tate conjecture}
Given a smooth projective $k$-scheme $X$, let us write $\mathrm{ord}_{s=i} \zeta(X,s)$ for the order of the pole of the Hasse-Weil zeta function $\zeta(X,s)$ of $X$ at $i \in \{0, 1, \ldots, \mathrm{dim}(X)\}$. In the sixties, Tate \cite{Tate} also conjectured the following strong form of the Tate conjecture\footnote{As proved in \cite[Thm.~2.9]{Tate-motives}, resp. \cite[Thm.~1.11]{Milne-AIM}, the strong form of the Tate conjecture $ST(X)$ implies the Tate conjecture $T^l(X)$ (for every $l\neq p$), resp. the $p$-version of the Tate conjecture $T^p(X)$.}:
\vspace{0.2cm}

Conjecture $ST(X)$: {\it The equality $\mathrm{ord}_{s=i} \zeta(X,s)= \mathrm{dim}\, \cZ^i(X)_\bbQ/_{\!\sim \mathrm{num}}$ holds}.

\vspace{0.2cm}

Let us write $\cZ^\ast(X)_\bbQ/_{\!\sim l\text{-}\mathrm{adic}}$, resp. $\cZ^\ast(X)_\bbQ/_{\!\sim\mathrm{crys}}$, for the quotient of $\cZ^\ast(X)_\bbQ$ with respect to the $l$-adic homological equivalence relation, resp. crystalline homological equivalence relation. Note that Beilinson's conjecture $B(X)$ implies that $\cZ^\ast(X)_\bbQ/_{\!\sim l\text{-}\mathrm{adic}}=\cZ^\ast(X)_\bbQ/_{\!\sim\mathrm{crys}}=\cZ^\ast(X)_\bbQ/_{\!\sim \mathrm{num}}$. Therefore, making use of \cite[Thm.~2.9]{Tate-motives}, resp.  \cite[Thm.~1.11]{Milne-AIM}, we conclude that $T^l(X) + B(X) \Rightarrow ST(X)$, resp. $T^p(X) + B(X) \Rightarrow ST(X)$. This implies that the strong form of the Tate conjecture also holds in the several new cases provided by Theorems \ref{thm:last} and \ref{thm:two}.
\subsection*{Tate, Beilinson and Parshin conjectures for stacks}
Theorem \ref{thm:Thomason} allow us to easily extend the original conjectures of Tate, Beilinson and Parshin from smooth projective $k$-schemes $X$ to smooth proper algebraic $k$-stacks $\cX$ by setting 
\begin{eqnarray*}
?(\cX):= ?_{\mathrm{nc}}(\perf_\dg(\cX)) & \mathrm{with} & ?\in \{T^l, T^p, B, P\}\,.
\end{eqnarray*}
The next result proves these extended conjectures for ``low-dimensional'' orbifolds:
\begin{theorem}\label{thm:orbifold}
Let $G$ be a finite group of order $s$, $X$ a smooth projective $k$-scheme equipped with a $G$-action, and $\cX:=[X/G]$ the associated global orbifold. If $p \nmid s$, then we have the following implications
\begin{eqnarray}
& \sum_{\sigma \subseteq G} T^l(X^\sigma \times \mathrm{Spec}(k[\sigma]))  \Rightarrow  T^l(\cX)&  (\mathrm{for}\,\mathrm{every}\,\, l\nmid s) \label{eq:implication-main1}\\
& \sum_{\sigma \subseteq G} T^p(X^\sigma \times \mathrm{Spec}(k[\sigma]))  \Rightarrow  T^p(\cX) & \label{eq:implication-main2}\\
& \sum_{\sigma \subseteq G} B(X^\sigma \times \mathrm{Spec}(k[\sigma]))  \Rightarrow  B(\cX)  & \label{eq:implication-main3}\\
& \,\,\sum_{\sigma \subseteq G} P(X^\sigma \times \mathrm{Spec}(k[\sigma])) \Rightarrow P(\cX)\,, & \label{eq:implication-main4}
\end{eqnarray}
where $\sigma$ is an arbitrary cyclic subgroup of $G$. Moreover, whenever $s\mid (q-1)$, resp. $\mathrm{dim}(X)\leq 3$, the $k$-schemes $X^\sigma \times \mathrm{Spec}(k[\sigma])$ in \eqref{eq:implication-main1}-\eqref{eq:implication-main4}, resp. in \eqref{eq:implication-main1}-\eqref{eq:implication-main2}, can be replaced by the $k$-schemes $X^\sigma$.
\end{theorem}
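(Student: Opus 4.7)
My plan is to deduce the four implications from a decomposition of the noncommutative motive of $\cX$ together with the stability of each of the four noncommutative conjectures under direct summands.

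The first step is to rewrite everything in noncommutative terms. By the very definition $?(\cX):=?_{\mathrm{nc}}(\perf_\dg(\cX))$, and by Theorem~\ref{thm:Thomason} applied to each of the smooth projective $k$-schemes $X^\sigma\times\Spec(k[\sigma])$, the implications \eqref{eq:implication-main1}--\eqref{eq:implication-main4} translate into implications from $?_{\mathrm{nc}}(\perf_\dg(X^\sigma\times\Spec(k[\sigma])))$, with $\sigma$ ranging over cyclic subgroups of $G$, to $?_{\mathrm{nc}}(\perf_\dg(\cX))$. To establish these, I would invoke the orbifold decomposition of the noncommutative motive of a global quotient $[X/G]$: under the assumption $p\nmid s$, the noncommutative motive $U(\perf_\dg(\cX))$ (with $\bbQ$-coefficients) appears as a direct summand of $\bigoplus_{\sigma\subseteq G\,\mathrm{cyclic}} U(\perf_\dg(X^\sigma\times\Spec(k[\sigma])))$. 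Such a decomposition is obtained by combining the classical splittings of equivariant $K$-theory/cohomology of a Deligne-Mumford stack (Vistoli, Toën) with a categorification at the level of dg categories (along the lines of Bergh-Lunts-Schnürer), and has been lifted to noncommutative motives in earlier work of the author.

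The second step is to note that all four noncommutative conjectures are stable under direct summands of smooth proper dg categories. Indeed, each of the underlying invariants --- $l$-adic cohomology, topological periodic cyclic homology (or crystalline cohomology), the noncommutative numerical motive, and algebraic $K$-theory --- factors through the category of additive noncommutative motives and therefore sends a direct summand $\cA'$ of $\cA$ to a direct summand of its value on $\cA$; consequently $?_{\mathrm{nc}}(\cA)\Rightarrow ?_{\mathrm{nc}}(\cA')$ for every $?\in\{T^l,T^p,B,P\}$. Combined with Step~1 this yields the four implications.

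For the replacements, assume first $s\mid (q-1)$. Then $k=\bbF_q$ contains all $s$-th roots of unity, hence for every cyclic subgroup $\sigma\subseteq G$ of order $n\mid s$ the group algebra $k[\sigma]=k[t]/(t^n-1)$ is isomorphic to $k^{\times n}$, so $X^\sigma\times\Spec(k[\sigma])\simeq \coprod_{i=1}^{n} X^\sigma$ as $k$-schemes; additivity of the four conjectures under finite disjoint unions then reduces the hypotheses to conjectures on $X^\sigma$ alone. For $\mathrm{dim}(X)\leq 3$, one has $\mathrm{dim}(X^\sigma)\leq 3$ and $k[\sigma]=k[t]/(t^n-1)$ splits as a product of finite field extensions $L_j/k$, so $X^\sigma\times \Spec(k[\sigma])\simeq \coprod_j X^\sigma\times_k\Spec(L_j)$. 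Since $T^l$ and $T^p$ are equivalent in dimension $\leq 3$ (cf.\ the footnote to \S\ref{sub:divisors}) and standard base-change for $l$-adic and crystalline cohomology shows that $T^l(X^\sigma)$ (equivalently $T^p(X^\sigma)$) implies $T^l$ and $T^p$ of any finite base change, this yields the asserted replacement for \eqref{eq:implication-main1}--\eqref{eq:implication-main2}.

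The main obstacle I anticipate is Step~1: producing a decomposition of $\perf_\dg([X/G])$ that identifies the $\sigma$-isotypic components with $\perf_\dg(X^\sigma\times\Spec(k[\sigma]))$ at the level of $\bbQ$-linear noncommutative motives. This requires carefully matching the Weyl-group $W_\sigma$-invariants against the categorical $G$-action, and relies crucially on the Maschke-type condition $p\nmid s$, which guarantees the semisimplicity of $k[\sigma]$ and the splitting of the relevant exact sequences of $G$-equivariant sheaves on $X$. The base-change step for $T^l/T^p$ in dimension $\leq 3$ is routine but must also be carried out with some care.
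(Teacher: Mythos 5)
Your overall strategy (an orbifold decomposition of the motive of $[X/G]$ plus stability of the noncommutative conjectures under direct summands) is the same as the paper's, and your treatment of \eqref{eq:implication-main2}--\eqref{eq:implication-main4} is essentially correct, since for $T^p_{\mathrm{nc}}$, $B_{\mathrm{nc}}$ and $P_{\mathrm{nc}}$ the relevant invariants are $K$-linear or $\bbQ$-linear and a decomposition of $U(\perf_\dg(\cX))$ with $\bbQ$- (resp.\ $K$-) coefficients suffices. However, there is a genuine gap in your treatment of the Tate conjecture \eqref{eq:implication-main1}. The conjecture $T^l_{\mathrm{nc}}(\cA)$ concerns the torsion groups $\Hom(\bbZ(l^\infty),\pi_{-1}L_{KU}K(\cA\otimes_{\bbF_q}\bbF_{q^m}))$, which are \emph{integral} invariants; a direct-summand decomposition of $U(\perf_\dg(\cX))$ in the $\bbQ$-linearized category $\Hmo_0(k)_\bbQ$ gives no control whatsoever over these groups, so your Step~2 claim that the conjecture ``factors through the category of additive noncommutative motives'' and is therefore inherited by summands breaks down precisely here. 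The correct argument runs the decomposition of \cite{Orbifold} through the additive invariants $E_m(-)_{1/s}$ with values in $\bbZ[1/s]$-modules (this is where $p\nmid s$, hence $1/s\in k$, is used), obtaining that $E_m(\perf_\dg(\cX))_{1/s}$ is a summand of $\bigoplus_\sigma E_m(\perf_\dg(X^\sigma\times\mathrm{Spec}(k[\sigma])))_{1/s}$, and then compares the localized conjecture $T^l_{\mathrm{nc}}(-;1/s)$ with the original one via Lemma \ref{lem:aux}. This comparison is also the \emph{only} place the restriction $l\nmid s$ enters; your proposal never explains where that hypothesis is used, which is a symptom of the missing coefficient bookkeeping.

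A secondary, smaller issue: for the replacement of $X^\sigma\times\mathrm{Spec}(k[\sigma])$ by $X^\sigma$ when $\dim(X)\leq 3$, you appeal to ``standard base-change'' to claim $T^l(X^\sigma)$ implies $T^l$ of any finite base change. The direction you need is not a formal base-change statement; the paper instead reduces to the Tate conjecture for divisors $T^{l,1}$ (legitimate since all the schemes involved have dimension $\leq 3$, cf.\ \S\ref{sub:divisors}) and invokes the product formula $T^{l,1}(X^\sigma\times\mathrm{Spec}(k[\sigma]))\Leftrightarrow T^{l,1}(X^\sigma)+T^{l,1}(\mathrm{Spec}(k[\sigma]))$ of \cite[Thm.~5.2]{Tate-motives}. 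You should either cite such a product/descent statement or supply the argument; as written this step is asserted rather than proved.
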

Note that the assumption $s\mid (q-1)$ implies that $p\nmid s$.
\begin{corollary}\label{cor:examples} 
\begin{itemize}
\item[(i)] Assume that $p\nmid s$. If $\mathrm{dim}(X)\leq 1$, then the conjectures $T^l(\cX)$ (for every $l\nmid s$), $T^p(\cX)$, $B(\cX)$, and $P(\cX)$, hold. 
\item[(ii)]  Assume\footnote{In the particular case of the ($p$-version of the) Tate conjecture, it suffices to assume that $p\nmid s$.} that $s\mid (q-1)$. If $\mathrm{dim}(X)=2$, then $T^l(X) \Rightarrow T^l(\cX)$ (for every $l\nmid s$), $T^l(X) \Rightarrow T^l(\cX)$,  $B(X) \Rightarrow B(\cX)$, and $P(X) \Rightarrow P(\cX)$.
\end{itemize}
\end{corollary}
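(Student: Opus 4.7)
The strategy is to deduce both parts directly from Theorem \ref{thm:orbifold}, exploiting the fact recorded in the introduction that the conjectures $T^l$, $T^p$, $B$, and $P$ hold unconditionally for smooth projective $k$-schemes of dimension $\leq 1$. All the substantive work has already been carried out in Theorem \ref{thm:orbifold}; the corollary amounts to verifying that its hypotheses are automatic in the stated dimensional regimes.

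For part (i), $\dim(X) \leq 1$ forces $\dim(X^\sigma) \leq 1$ for every cyclic subgroup $\sigma \subseteq G$. Since $p \nmid s$, the group algebra $k[\sigma]$ is separable, so $\Spec(k[\sigma])$ is a finite disjoint union of spectra of finite separable field extensions of $k$. Consequently each product $X^\sigma \times \Spec(k[\sigma])$ is a finite disjoint union of smooth projective schemes of dimension $\leq 1$ over finite extensions of $k$, and for these all four conjectures hold unconditionally. Substituting into the implications \eqref{eq:implication-main1}-\eqref{eq:implication-main4} yields $T^l(\cX)$ (for $l \nmid s$), $T^p(\cX)$, $B(\cX)$, and $P(\cX)$.

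For part (ii), the hypothesis $s \mid (q-1)$ (respectively $\dim(X) \leq 3$ combined with $p \nmid s$ in the case of $T^p$) lets us invoke the final clause of Theorem \ref{thm:orbifold} and replace each $X^\sigma \times \Spec(k[\sigma])$ by $X^\sigma$. Decomposing $X$ into its connected components, which are smooth projective irreducible surfaces, we analyze each cyclic $\sigma \subseteq G$ componentwise. Either $\sigma$ stabilizes a component $X_i$ setwise and acts trivially on it, in which case the associated contribution to $X^\sigma$ equals $X_i$ — a smooth projective surface for which the assumed conjecture for $X$ supplies the required conclusion — or the contribution is a proper smooth closed subscheme of $X_i$, hence of dimension $\leq 1$, where the conjectures hold unconditionally. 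Summing over $\sigma$ and substituting into \eqref{eq:implication-main1}-\eqref{eq:implication-main4} delivers the four implications.

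No serious obstacle is anticipated. The only technical points to record are the smoothness of $X^\sigma$ when $|\sigma|$ is coprime to $p$ (tameness), the standard additivity of the four conjectures under finite disjoint unions, and their compatibility with base change to finite field extensions of $k$. All real content lies in Theorem \ref{thm:orbifold}, which is already in hand; this corollary simply isolates two clean dimensional regimes where its hypotheses become automatic.
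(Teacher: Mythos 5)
Your proposal is correct and coincides with the paper's intended argument: the corollary is stated without a separate proof precisely because it is the immediate specialization of Theorem \ref{thm:orbifold} that you describe, using that the four conjectures hold unconditionally in dimension $\leq 1$, that $X^\sigma$ is smooth of dimension $\leq 1$ (or a full $2$-dimensional component on which $\sigma$ acts trivially, handled by the hypothesis on $X$), and that the clause allowing $X^\sigma\times\mathrm{Spec}(k[\sigma])$ to be replaced by $X^\sigma$ applies either via $s\mid(q-1)$ or, for the two Tate conjectures, via $\dim(X)\leq 3$.
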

\begin{example}
Let $X$ be an abelian surface equipped with the $\bbZ/2$-action $a \mapsto-a$. Since the conjectures $T^l(X)$ and $T^p(X)$ hold, Corollary \ref{cor:examples}(ii) implies that the conjectures $T^l(\cX)$ (for every $l\neq 2$) and $T^p(\cX)$ also hold.
\end{example}
%
We finish this section with the following ``twisted'' version of Corollary \ref{cor:examples}:
\begin{theorem}\label{prop:twisted}
Let $G$ be a finite group of order $s$, $X$ a smooth projective $k$-scheme equipped with a $G$-action, $\cX:=[X/G]$ the associated global orbifold, and $\cF$ a $G$-equivariant sheaf of Azumaya algebras over $S$ of rank $r$. Assume that $s\mid (q-1)$.
\begin{itemize}
\item[(i)] If $\mathrm{dim}(X)\leq 1$, then the conjectures $T^l(\cX;\cF)$ (for every $l\nmid sr$), $T^p(\cX;\cF)$, $B(\cX;\cF)$, and $P(\cX;\cF)$, hold.
\item[(ii)] If the $G$-action is faithful and $\mathrm{dim}(X)=2$, then $T^l(X) \Rightarrow T^l(\cX;\cF)$ (for every $l\nmid sr$), $T^p(X) \Rightarrow T^p(\cX;\cF)$, $B(X) \Rightarrow B(\cX;\cF)$, and $P(X) \Rightarrow P(\cX;\cF)$.
\end{itemize}
\end{theorem}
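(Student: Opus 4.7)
The plan is to adapt the proof of Theorem \ref{thm:orbifold} and Corollary \ref{cor:examples} to the twisted setting, with the assumption $s \mid (q-1)$ playing the same splitting role and the coprimality assumption $l \nmid sr$ handling the Azumaya twist.

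First, I would establish a twisted analogue of Theorem \ref{thm:orbifold} under the hypothesis $s \mid (q-1)$: namely, a Morita equivalence that decomposes $\perf_\dg(\cX;\cF)$ (up to the relevant noncommutative invariants) as a direct sum, indexed by conjugacy classes of cyclic subgroups $\sigma \subseteq G$, of summands of the form $\perf_\dg(X^\sigma;\cF|_{X^\sigma})$, tensored with characters of $\sigma$. When $s \mid (q-1)$ the group algebras $k[\sigma]$ split as products of copies of $k$, so no extension of scalars $\mathrm{Spec}(k[\sigma])$ is needed. This reduces the four twisted orbifold conjectures to the corresponding noncommutative conjectures $?_{\mathrm{nc}}(\perf_\dg(X^\sigma;\cF|_{X^\sigma}))$ on the fixed loci.

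Second, I would prove the following auxiliary statement: for any smooth projective $k$-scheme $Y$ of dimension $\leq 1$ and any sheaf $\cG$ of Azumaya algebras on $Y$ of rank $r$, the conjectures $T^l_{\mathrm{nc}}(\perf_\dg(Y;\cG))$ (for every $l \nmid r$), $T^p_{\mathrm{nc}}(\perf_\dg(Y;\cG))$, $B_{\mathrm{nc}}(\perf_\dg(Y;\cG))$, and $P_{\mathrm{nc}}(\perf_\dg(Y;\cG))$ all hold. The idea is to pass to a finite étale splitting cover $Y' \to Y$ of degree dividing a power of $r$, over which $\cG$ becomes Morita trivial, and then use that the noncommutative conjectures for $\perf_\dg(Y';\cG_{|Y'})$ follow from the (already established) commutative conjectures for $Y'$; descent along the étale cover is injective on the relevant $\bbQ_l$-, $\bbQ_p$-, $\bbQ$-, and $K$-theoretic invariants because the degree is invertible in $\bbQ_l$ (using $l \nmid r$), in $\bbQ_p$ (using that $p \nmid s$ and choosing the cover to have degree coprime to $p$), and in $\bbQ$ respectively.

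Third, I would combine these. For (i), when $\mathrm{dim}(X) \leq 1$ each $X^\sigma$ is a smooth projective $k$-scheme of dimension $\leq 1$ and $\cF|_{X^\sigma}$ is an Azumaya algebra of rank dividing $r$; since $l \nmid sr \Rightarrow l \nmid r$, the second step applies and the first step then yields the claim. For (ii), when $\mathrm{dim}(X)=2$ and the $G$-action is faithful, every non-trivial cyclic subgroup $\sigma$ acts faithfully on the smooth surface $X$, so $X^\sigma$ is a proper closed subscheme of dimension $\leq 1$, and the second step takes care of these summands; the only remaining summand is the one for $\sigma = \{e\}$, i.e.\ $\perf_\dg(X;\cF)$, for which the same étale-descent argument as in the second step converts the commutative conjectures $?(X)$ into the twisted noncommutative conjectures $?_{\mathrm{nc}}(\perf_\dg(X;\cF))$, using $l \nmid r$.

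The main obstacle will be the second step: carefully setting up the étale-descent argument uniformly for the $l$-adic, crystalline, numerical/rational, and $K$-theoretic invariants, and verifying that the splitting cover can be chosen of degree coprime to $l$, $p$, and compatible with the $G$-action so that it interacts correctly with Step 1. Once this coprime-descent principle is in place, the rest of the argument is formal, following the pattern already used for the untwisted Corollary \ref{cor:examples}.
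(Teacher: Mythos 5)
Your overall architecture (decompose the twisted orbifold into fixed-locus contributions, then conclude by low-dimensionality) matches the paper's, but two of your steps do not hold up. First, the decomposition you assert in Step 1 is not the one that is actually available: the relevant result, \cite[Cor.~1.29(ii)]{Orbifold}, does not produce summands $\perf_\dg(X^\sigma;\cF|_{X^\sigma})$ tensored with characters, but rather \emph{untwisted} schemes $Y_\sigma$, certain $\sigma^\vee$-Galois covers of $X^\sigma$: the $\sigma$-equivariant structure of $\cF$ over $X^\sigma$ (where $\sigma$ acts trivially) contributes a cover datum beyond the underlying Azumaya algebra, and the Azumaya twist itself disappears only after inverting $r$, via \cite[Thm.~2.1]{Azumaya}. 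You assert your decomposition rather than prove it, yet it is the main input; establishing such a statement is essentially the content of a separate paper, not a routine adaptation of Theorem \ref{thm:orbifold}.

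Second, and more seriously, your \'etale-descent step for the identity sector in part (ii) has the implication pointing the wrong way. A finite \'etale splitting cover $f\colon X'\to X$ of degree $d$ gives, via the transfer, that the invariants of $\perf_\dg(X;\cF)$ and of $\perf_\dg(X)$ are each direct summands of those of $X'$ after inverting $d$. This would let you deduce the twisted conjectures for $X$ from the untwisted conjectures for the \emph{cover} $X'$ --- but your hypothesis in (ii) is $?(X)$, and $?(X)$ does not imply $?(X')$, since the summand relation runs from $X$ into $X'$ and not conversely. So your argument does not convert $T^l(X)$ into $T^l_{\mathrm{nc}}(\perf_\dg(X;\cF))$. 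What is needed is the direct comparison $U(\perf_\dg(X;\cF))_{1/r}\simeq U(\perf_\dg(X))_{1/r}$ of \cite[Thm.~2.1]{Azumaya} (cf.\ Remark \ref{rk:Azumaya}), which involves no cover; in the paper's proof this is already built into the cited corollary, so that the identity-sector summand is literally $X$. A further unresolved point in your sketch: when $p\mid r$ there is no reason a splitting cover of degree prime to $p$ exists, which would break your $T^p$ argument, whereas the paper's route is insensitive to this because $\mathrm{char}(K)=0$.
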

\section{Preliminaries}\label{sec:preliminaries}
Throughout the article, $k:=\bbF_q$ is a finite field of characteristic $p$ with $q=p^n$.
\subsection{Dg categories}\label{sub:dg}
For a survey on dg categories, we invite the reader to consult Keller's ICM address \cite{ICM-Keller}. Let $(\cC(k),\otimes, k)$ be the category of dg $k$-vector spaces. A {\em differential
  graded (=dg) category $\cA$} is a category enriched over $\cC(k)$
and a {\em dg functor} $F\colon\cA\to \cB$ is a functor enriched over
$\cC(k)$. In what follows, we will write $\dgcat(k)$ for the category of (essentially small) dg categories and dg functors.

Let $\cA$ be a dg category. The opposite dg category $\cA^\op$ has the
same objects and $\cA^\op(x,y):=\cA(y,x)$. A {\em right dg
  $\cA$-module} is a dg functor $M\colon \cA^\op \to \cC_\dg(k)$ with values
in the dg category $\cC_\dg(k)$ of dg $k$-vector spaces. Following \cite[\S3.2]{ICM-Keller}, the derived category $\cD(\cA)$ of $\cA$ is defined as the localization of the category of right dg $\cA$-modules $\cC(\cA)$ with respect to the objectwise quasi-isomorphisms. In what follows, we will write $\cD_c(\cA)$ for the triangulated subcategory of compact objects.

A dg functor $F\colon\cA\to \cB$ is called a {\em Morita equivalence} if it
induces an equivalence on derived categories $\cD(\cA) \simeq 
\cD(\cB)$; see \cite[\S4.6]{ICM-Keller}. As explained in
\cite[\S1.6]{book}, the category $\dgcat(k)$ admits a Quillen model
structure whose weak equivalences are the Morita equivalences. Let us
denote by $\Hmo(k)$ the associated homotopy category.

The {\em tensor product $\cA\otimes\cB$} of dg categories is defined
as follows: the set of objects is $\mathrm{obj}(\cA)\times \mathrm{obj}(\cB)$ and
$(\cA\otimes\cB)((x,w),(y,z)):= \cA(x,y) \otimes \cB(w,z)$. As
explained in \cite[\S2.3]{ICM-Keller}, this construction gives rise to
a symmetric monoidal structure on $\dgcat(k)$ which descends to the homotopy category
$\Hmo(k)$. 

A {\em dg $\cA\text{-}\cB$-bimodule} is a dg functor
$\mathrm{B}\colon \cA\otimes \cB^\op \to \cC_\dg(k)$ or, equivalently, a
right dg $(\cA^\op \otimes \cB)$-module. A standard example is the dg
$\cA\text{-}\cB$-bimodule
\begin{eqnarray}\label{eq:bimodule2}
{}_F\cB:\cA\otimes \cB^\op \to \cC_\dg(k) && (x,z) \mapsto \cB(z,F(x))
\end{eqnarray}
associated to a dg functor $F:\cA\to \cB$. Following Kontsevich \cite{Miami,finMot,IAS}, a dg category $\cA$ is called {\em smooth} if the dg $\cA\text{-}\cA$-bimodule ${}_{\id}\cA$ belongs to the category $\cD_c(\cA^\op\otimes \cA)$ and {\em proper} if $\sum_i \mathrm{dim}\, H^i\cA(x,y)< \infty$ for any pair of objects $(x,y)$. 

\subsection{Additive invariants}\label{sub:additive}
Let $\cA$ and $\cB$ be two dg categories and $\mathrm{B}$ a dg $\cA\text{-}\cB$-bimodule. Consider the following dg category $T(\cA,\cB;\mathrm{B})$: the set of objects is $\mathrm{obj}(\cA)\amalg \mathrm{obj}(\cB)$, the dg $k$-vector spaces of morphisms are given as follows 
$$ T(\cA, \cB; \mathrm{B})(x,y):=\begin{cases}
\cA(x,y) & \text{if}\,\,\, x, y \in \cA \\
\cB(x,y) & \text{if}\,\,\, x, y \in \cB \\
\mathrm{B}(x,y) & \text{if}\,\,\, x \in \cA\,\,\text{and}\,\, y \in \cB \\
0 & \text{if}\,\,\, x \in \cB\,\,\text{and}\,\, y \in \cA \,,\\
\end{cases}
$$
and the composition law is induced by the composition law of $\cA$ and $\cB$ and by the dg $\cA\text{-}\cB$-bimodule structure of $\mathrm{B}$. Note that, by construction, we have canonical dg functors $\iota_\cA \colon \cA \to T(\cA, \cB;\mathrm{B})$ and $\iota_\cB\colon \cB \to T(\cA, \cB; \mathrm{B})$. 

Recall from \cite[Def.~2.1]{book} that a functor $E\colon \dgcat(k) \to \mathrm{D}$, with values in an additive category, is called an {\em additive invariant} if it satisfies the following conditions:
\begin{itemize}
\item[(i)] It sends the Morita equivalences to isomorphisms.
\item[(ii)] Given $\cA$, $\cB$, and $\mathrm{B}$, as above, the dg functos $\iota_\cA$ and $\iota_\cB$ induce an isomorphism
$$ E(\cA) \oplus E(\cB) \stackrel{\sim}{\too} E(T(\cA,\cB;\mathrm{B}))\,.$$
\end{itemize}
Let us write $\rep(\cA,\cB)$ for the full triangulated subcategory of $\cD(\cA^\op \otimes \cB)$ consisting of those dg $\cA\text{-}\cB$-modules $\mathrm{B}$ such that for every object $x \in \cA$ the associated right dg $\cB$-module $\mathrm{B}(x,-)$ belongs to $\cD_c(\cB)$. As explained in \cite[\S1.6.3]{book}, there is a
  natural bijection between $\Hom_{\Hmo(k)}(\cA,\cB)$ and the set of
  isomorphism classes of the category $\rep(\cA,\cB)$. Under this bijection, the composition law of $\Hmo(k)$ corresponds to the (derived) tensor product of bimodules. Therefore, since the dg $\cA\text{-}\cB$-bimodules
  \eqref{eq:bimodule2} belong to $\rep(\cA,\cB)$, we have the following symmetric monoidal functor:
\begin{eqnarray}\label{eq:functor1}
\dgcat(k)\too \Hmo(k) & \cA \mapsto \cA & (\cA\stackrel{F}{\to} \cB) \mapsto {}_F \cB\,.
\end{eqnarray}
The {\em additivization} of $\Hmo(k)$ is the additive category
$\Hmo_0(k)$ with the same objects as $\Hmo(k)$ and with abelian groups of morphisms $\Hom_{\Hmo_0(k)}(\cA,\cB)$ given by the Grothendieck group
$K_0\rep(\cA,\cB)$ of the triangulated category $\rep(\cA,\cB)$. As explained in \cite[\S2.3]{book}, the following composition
\begin{eqnarray}\label{eq:universal}
U\colon \dgcat(k) \stackrel{\eqref{eq:functor1}}{\too} \Hmo(k) \too \Hmo_0(k) && \cA \mapsto \cA \quad (\cA \stackrel{F}{\to} \cB) \mapsto [{}_F \cB]
\end{eqnarray}
is the {\em universal} additive invariant. Moreover, the symmetric monoidal structure of $\Hmo(k)$ extends to $\Hmo_0(k)$, making the above functor \eqref{eq:universal} symmetric monoidal.
\subsection{Noncommutative motives}
For a book, resp. survey, on noncommutative motives, we invite the reader to consult \cite{book}, resp. \cite{survey}. Given a commutative ring $R$, recall from \cite[\S4.1]{book} that the category of {\em noncommutative Chow motives} $\NChow(k)_R$ (with $R$-coefficients) is defined as the idempotent completion of the full subcategory of $\Hmo_0(k)_R$ consisting of those objects $U(\cA)_R$ with $\cA$ a smooth proper dg category. As explained in {\em loc. cit.}, the category $\NChow(k)_R$ is $R$-linear, additive, and rigid symmetric monoidal. Moreover, we have natural isomorphisms:
\begin{equation}\label{eq:isos}
\Hom_{\NChow(k)_R}(U(\cA)_R, U(\cB)_R):= K_0(\rep(\cA^\op \otimes \cB))_R \simeq K_0(\cA^\op \otimes \cB)_R\,.
\end{equation}
Given a $R$-linear, additive, rigid symmetric monoidal category $(\cC,\otimes, {\bf 1})$, its $\cN$-ideal is defined as follows ($\mathrm{tr}(g\circ f)$ stands for the categorical trace of $g\circ f$):
\begin{equation*}\label{eq:N}
\cN(a, b) := \{f \in \Hom_\cC(a, b) \,\,|\,\, \forall g \in \Hom_\cC(b, a)\,\, \mathrm{we}\,\,\mathrm{have}\,\, \mathrm{tr}(g\circ f) =0 \}\,.
\end{equation*}
Under these notations, recall from \cite[\S4.6]{book} that the category of {\em noncommutative numerical motives} $\NNum(k)_R$ (with $R$-coefficients) is defined as the idempotent completion of the quotient category $\NChow(k)_R/\cN$.
\subsection{Tate conjecture for divisors}\label{sub:divisors}
Let $X$ be a smooth projective $k$-scheme of dimension $d$. Given a prime number $l\neq p$, consider the Tate conjecture for divisors:

\vspace{0.2cm}

Conjecture $T^{l,1}(X)$: {\it The cycle class map \eqref{eq:class-map} with $\ast=1$ is surjective}.

\vspace{0.2cm}

As proved in \cite[Prop.~4.1]{Tate-motives}, we have the implication $T^{l,1}(X) \Rightarrow T^{l,d-1}(X)$. Consequently, whenever $\mathrm{dim}(X)\leq 3$, we conclude that $T^l(X) \Leftrightarrow T^{l,1}(X)$.

Consider also the $p$-version of the Tate conjecture for divisors:

\vspace{0.2cm}

Conjecture $T^{p,1}(X)$: {\it The cycle class map \eqref{eq:cycle-crystalline} with $\ast=1$ is surjective}.

\vspace{0.2cm}

As proved in \cite[Prop.~4.1]{Morrow}, we have $T^{p,1}(X) \Leftrightarrow T^{l,1}(X)$ (for every $l\neq p$). Moreover, a proof similar to the one of \cite[Prop.~5.1]{Tate-motives}, with the commutative diagram (2.3) of \cite{Tate-motives} replaced by the commutative diagram of \cite[page~25]{Morrow}, shows that $T^{p,1}(X) \Rightarrow T^{p, d-1}(X)$. Consequently, whenever $\mathrm{dim}(X)\leq 3$, we conclude that $T^p(X) \Leftrightarrow T^{p,1}(X)$. This implies that whenever $\mathrm{dim}(X)\leq 3$, we have the equivalence $T^l(X) \Leftrightarrow T^p(X)$ (for every $l\neq p$).

\section{Noncommutative conjectures}\label{sec:variants}
Throughout this section, $\cA$ denotes a smooth proper dg category.
\subsection*{Noncommutative Tate conjecture}
Given a prime number $l\neq p$, consider the following abelian groups 
\begin{eqnarray}\label{eq:groups11}
\Hom\left(\bbZ(l^\infty), \pi_{-1} L_{KU}K(\cA\otimes_{\bbF_q} \bbF_{q^m})\right) && m \geq 1\,,
\end{eqnarray}
where $\bbZ(l^\infty)$ stands for the Pr\"ufer $l$-group, $K(\cA\otimes_{\bbF_q} \bbF_{q^m})$ for the algebraic $K$-theory spectrum of the dg category $\cA\otimes_{\bbF_q} \bbF_{q^m}$, and $L_{KU}K(\cA\otimes_{\bbF_q} \bbF_{q^m})$ for the Bousfield localization of $K(\cA\otimes_{\bbF_q} \bbF_{q^m})$ with respect to topological complex $K$-theory $KU$. Note that the abelian groups \eqref{eq:groups11} can, alternatively, be defined as the $l$-adic Tate module of the abelian groups $\pi_{-1} L_{KU}K(\cA\otimes_{\bbF_q} \bbF_{q^m}), m \geq 1$. Under these notations, Tate's conjecture admits the following noncommutative analogue:

\vspace{0.2cm}

Conjecture $T^l_{\mathrm{nc}}(\cA)$: {\it The abelian groups \eqref{eq:groups11} are trivial}.

\begin{remark} Note that the conjecture $T^l_{\mathrm{nc}}(\cA)$ holds, for example, whenever the abelian groups $\pi_{-1} L_{KU}K(\cA\otimes_{\bbF_q} \bbF_{q^m}), m \ge 1$, are finitely generated. 
\end{remark}
\subsection*{Noncommutative $p$-version of the Tate conjecture}
By construction, the topological Hochschild homology $THH(\cA)$ of $\cA$ carries a canonical $S^1$-action. This leads naturally to the spectrum of homotopy orbits $THH(\cA)_{hS^1}$, to the spectrum of homotopy fixed-points $TC^-(\cA):=THH(\cA)^{h S^1}$, and also to the Tate construction $TP(\cA):=THH(\cA)^{t S^1}$. As explained in \cite[Cor.~I.4.3]{NS}, these spectra are related by the following cofiber sequence
\begin{equation}\label{eq:cofiber}
\Sigma \,THH(\cA)_{hS^1} \stackrel{\mathrm{N}}{\too} THH(\cA)^{hS^1} \stackrel{\mathrm{can}}{\too} THH(\cA)^{tS^1}\,, 
\end{equation}
where $\mathrm{N}$ stands for the norm map. It is well-known that the abelian groups $THH_\ast(\cA)$ are $k$-linear. Hence, after inverting $p$, we have $\Sigma \,THH(\cA)_{hS^1}[1/p]\simeq \ast$. Consequently, the above cofiber sequence \eqref{eq:cofiber} leads to a canonical isomorphism:
\begin{equation}\label{eq:canonical}
\mathrm{can}\colon TC_0^-(\cA)_{1/p} \stackrel{\sim}{\too} TP_0(\cA)_{1/p}\,.
\end{equation}
It is also well-known that the spectrum $THH(\cA)$ is {\em bounded below}, i.e. there exists a integer $m\gg 0$ such that $THH_i(\cA)=0$ for every $i<m$. This follows, for example, from B\"okstedt's celebrated computation $THH_\ast(k)\simeq k[u]$ (where the variable $u$ is of degree $2$) and from the fact that $THH_\ast(\cA)$ is a dualizable $THH_\ast(k)$-module. Since the abelian groups $THH_\ast(\cA)$ are $k$-linear, the spectrum $THH(\cA)$ is moreover $p$-complete. Making use of \cite[Lem.~II 4.2]{NS}, we hence obtain a ``cyclotomic Frobenius'' (which is defined before inverting $p$):
\begin{equation}\label{eq:cyclotomic}
\varphi_p\colon TC^-_0(\cA)_{1/p} \too TP_0(\cA)_{1/p}\,.
\end{equation} 
Let $\varphi:=\varphi_p \circ \mathrm{can}^{-1}$ be the associated endomorphism of $TP_0(\cA)_{1/p}$. It is also well-known that $TP_0(\cA)_{1/p}$ is a (finitely generated) module over $TP_0(k)_{1/p}\simeq K$, i.e. a (finite-dimensional) $K$-vector space. Moreover, the endomorphism $\varphi$ is $\varsigma$-semilinear with respect to the automorphism $\varsigma\colon K\to K$ that acts as $\lambda \mapsto \lambda^p$ on $k$. Hence, $\varphi^n$ becomes a $K$-linear endomorphism of $TP_0(\cA)_{1/p}$.

Recall from \cite[Prop.~4.2]{positive} that the assignment $\cA\mapsto TP_0(\cA)_{1/p}$ gives rise to a $K$-linear functor with values in the category of $K$-vector spaces:
\begin{equation}\label{eq:TP}
TP_0(-)_{1/p}\colon \NChow(k)_K \too \mathrm{Vect}(K)\,.
\end{equation}
This leads to the induced $K$-linear homomorphism: 
$$
K_0(\cA)_K \simeq \Hom(U(k)_K,U(\cA)_K) \stackrel{\theta}{\too} \Hom(TP_0(k)_{1/p}, TP_0(\cA)_{1/p})\simeq TP_0(\cA)_{1/p}\,.
$$
\begin{lemma}\label{lem:factorization}
The preceding homomorphism $\theta$ take values in the $K$-linear subspace $TP_0(\cA)_{1/p}^{\varphi^n}$ of those elements which are fixed by the $K$-linear endomorphism $\varphi^n$.
\end{lemma}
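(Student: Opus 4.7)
\medskip

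\noindent\textbf{Proof proposal.} The plan is to deduce the lemma from the naturality of the cyclotomic Frobenius on noncommutative motives, after computing what happens on the unit object $U(k)_K$. First I would verify the base case $\cA=k$: by B{\"o}kstedt--Hesselholt--Madsen (or, in the formulation of \cite{NS}), one has $TP_0(k)_{1/p}\simeq K$ and, under this identification, the endomorphism $\varphi$ of $TP_0(k)_{1/p}$ agrees with the Witt-vector Frobenius $\varsigma\colon K\to K$ acting as $\lambda\mapsto\lambda^p$ on $k$. Since $k=\bbF_q$ with $q=p^n$, the Frobenius of $W(k)$ has order $n$, hence $\varphi^n=\id$ on $TP_0(k)_{1/p}$.

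Next I would argue that $\varphi$ defines a natural transformation of the functor $TP_0(-)_{1/p}\colon\NChow(k)_K\to\mathrm{Vect}(K)$ of \eqref{eq:TP}; in particular $\varphi^n$ is an endomorphism of this $K$-linear functor. On the level of dg categories this is clear because both $\mathrm{can}$ and $\varphi_p$ are natural transformations of (lax symmetric monoidal) functors out of $\dgcat(k)$, so the same holds for $\varphi=\varphi_p\circ\mathrm{can}^{-1}$. To descend to $\NChow(k)_K$ one uses that a morphism $U(\cA)_K\to U(\cB)_K$ is represented, via the isomorphism \eqref{eq:isos}, by (a $K$-linear combination of classes of) dg $\cA^\op\otimes\cB$-bimodules, acting on $TP_0$ through the lax monoidal structure; the naturality of $\varphi$ with respect to this bimodule action follows from its compatibility with the symmetric monoidal structure carried by topological periodic cyclic homology, exactly as in the construction of \eqref{eq:TP} in \cite[Prop.~4.2]{positive}.

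Granting these two ingredients, the lemma is immediate: any class $\alpha\in K_0(\cA)_K$ corresponds to a morphism $\alpha\colon U(k)_K\to U(\cA)_K$ in $\NChow(k)_K$, and by definition $\theta(\alpha)=TP_0(\alpha)_{1/p}(1)$ where $1\in TP_0(k)_{1/p}=K$. Applying naturality of $\varphi^n$ to $\alpha$ yields
\begin{equation*}
\varphi^n_\cA\bigl(\theta(\alpha)\bigr)=\varphi^n_\cA\bigl(TP_0(\alpha)_{1/p}(1)\bigr)=TP_0(\alpha)_{1/p}\bigl(\varphi^n_k(1)\bigr)=TP_0(\alpha)_{1/p}(1)=\theta(\alpha),
\end{equation*}
the penultimate equality being the base-case computation $\varphi^n_k=\id$.

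The main obstacle is the second step: rigorously promoting $\varphi$ from a natural transformation of functors on $\dgcat(k)$ to one of functors on $\NChow(k)_K$. The delicate point is compatibility with bimodule tensor product; this is exactly the compatibility that allowed Tabuada to define the $K$-linear functor \eqref{eq:TP} in the first place, so in practice the argument should proceed by quoting (or reproving in parallel) the construction of \eqref{eq:TP} in \cite[Prop.~4.2]{positive} with $\varphi$ included as extra structure.
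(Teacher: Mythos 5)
Your proposal is correct and follows essentially the same route as the paper: both arguments rest on (a) $\varphi^n$ being a natural endomorphism of the functor \eqref{eq:TP} on $\NChow(k)_K$, (b) the identification of $\theta$ with the element $1\in TP_0(k)_{1/p}\simeq K$, and (c) the computation that $\varphi^n$ fixes $1$, which the paper extracts from the explicit formulas for $\mathrm{can}$ and $\varphi_p$ in \cite[Props.~6.2-6.3]{BMS2} and which underlies your claim that $\varphi=\varsigma$ on $TP_0(k)_{1/p}$. The only cosmetic difference is that the paper packages your direct naturality computation via the enriched Yoneda lemma (so that it suffices to check $\varphi^n(1)=1$ on the unit object), and your explicit flagging of the descent of naturality from $\dgcat(k)$ to $\NChow(k)_K$ is, if anything, more careful than the paper, which asserts this without comment.
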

\begin{proof}
On the one hand, the $K$-linear endomorphisms $\varphi^n\colon TP_0(\cA)_{1/p} \to TP_0(\cA)_{1/p}$ (parametrized by the smooth proper dg categories $\cA$) give rise to a natural transformation of the above functor \eqref{eq:TP}. On the other hand, thanks to the enriched Yoneda lemma, the $K$-linear natural transformations from the following functor
$$ K_0(-)_K\simeq \Hom_{\NChow(k)_K}(U(k)_K,-)\colon \NChow(k)_K \too \mathrm{Vect}(K) $$
to the above functor \eqref{eq:TP} are in one-to-one correspondence with the elements of $TP_0(k)_{1/p}\simeq K$. Under this bijection, the identity element $1 \in K$ corresponds to the above homomorphisms $\theta$. Therefore, in order to prove Lemma \ref{lem:factorization}, it suffices to show that the endomorphism $\varphi^n\colon TP_0(k)_{1/p} \to TP_0(k)_{1/p}$ sends $1$ to $1$. This follows from the following explicit descriptions
\begin{eqnarray*}
\mathrm{can}\colon W(k)[u,v]/(uv-p) \too  W(k)[\delta, \delta^{-1}] && u \mapsto p\delta \quad v \mapsto \delta^{-1} \\
\varphi_p\colon W(k)[u,v]/(uv-p) \too W(k)[\delta, \delta^{-1}] && u \mapsto \delta \quad v \mapsto p \delta^{-1}
\end{eqnarray*}
of the homomorphisms $\mathrm{can}, \varphi_p\colon TC^-_\ast(k) \to TP_\ast(k)$, where the variables $u$ and $\delta$ have degree $2$ and the variable $v$ has degree $-2$; see \cite[Props.~6.2-6.3]{BMS2}. 
\end{proof}
Thanks to Lemma \ref{lem:factorization}, we have a $K$-linear homomorphism:
\begin{equation}\label{eq:cycle-class}
K_0(\cA)_K \too TP_0(\cA)_{1/p}^{\varphi^n}\,.
\end{equation}
The $p$-version of Tate's conjecture admits the following noncommutative analogue:

\vspace{0.2cm}

Conjecture $T^p_{\mathrm{nc}}(\cA)$: {\it The homomorphism \eqref{eq:cycle-class} is surjective}.
\subsection*{Noncommutative Beilinson conjecture}
Recall from \cite[\S4.7]{book} that the group $K_0(\cA):=K_0(\cD_c(\cA))$ comes equipped with the Euler bilinear pairing:
\begin{eqnarray*}
\chi\colon K_0(\cA) \times K_0(\cA) \too \bbZ && ([M],[N]) \mapsto \sum_i (-1)^i \mathrm{dim}\, \Hom_{\cD_c(\cA)}(M,N[-i])\,.
\end{eqnarray*}
This bilinear pairing is, in general, not symmetric neither skew-symmetric. Nevertheless, as proved in \cite[Prop.~4.24]{book}, the left and right kernels agree. Consequently, we obtain the {\em numerical Grothendieck group} $K_0(\cA)/_{\!\sim\mathrm{num}}:=K_0(\cA)/\mathrm{Ker}(\chi)$.
\begin{notation}
Let $K_0(\cA)_\bbQ/_{\!\sim\mathrm{num}}:=K_0(\cA)_\bbQ/\mathrm{Ker}(\chi_\bbQ)\simeq (K_0(\cA)/_{\!\sim\mathrm{num}})_\bbQ$.
\end{notation}
Beilinson's conjecture admits the following noncommutative analogue:

\vspace{0.2cm}

Conjecture $B_{\mathrm{nc}}(\cA)$: {\it The equality $K_0(\cA)_\bbQ=K_0(\cA)_\bbQ/_{\!\sim \mathrm{num}}$ holds}.
\subsection*{Noncommutative Parshin conjecture}
Parshin's conjecture admits the following noncommutative analogue:

\vspace{0.2cm}

Conjecture $P_{\mathrm{nc}}(\cA)$: {\it We have $K_i(\cA)_\bbQ=0$ for every $i \geq1$}.
\section{Proof of Theorem \ref{thm:Thomason}}
As proved by Thomason in \cite{Thomason}, the Tate conjecture $T^l(X)$ is equivalent to the vanishing of the abelian groups $\Hom(\bbZ(l^\infty), \pi_{-1} L_{KU} K(X\times_{\bbF_q} \bbF_{q^m})), m \geq 1$. Therefore, the proof of the equivalence $T^l(X) \Leftrightarrow T^l_{\mathrm{nc}}(\perf_\dg(X))$ follows from the canonical Morita equivalence between the dg categories $\perf_\dg(X\times_{\bbF_q}\bbF_{q^m})$ and $\perf_\dg(X) \otimes_{\bbF_q} \bbF_{q^m}$; consult \cite[Lem.~4.26]{Gysin}.

Let us now prove the equivalence $T^p(X) \Leftrightarrow T^p_{\mathrm{nc}}(\perf_\dg(X))$. Recall that the ring of $p$-typical Witt vectors $W(k)$ is the unramified extension of degree $m$ of the ring of $p$-adic integers $\bbZ_p$. Hence, we have an induced field extension $\bbQ_p \to K$. Note that the cycle class map \eqref{eq:cycle-crystalline} is surjective if and only if the $K$-linear homomorphism 
$$\cZ^\ast(X)\otimes_{\bbQ_p}K \too H^{2\ast}_{\mathrm{crys}}(X)(\ast)^{\mathrm{Fr}_p}\otimes_{\bbQ_p}K$$ is surjective. Therefore, making use of the following natural isomorphisms
$$ H^{2\ast}_{\mathrm{crys}}(X)(\ast)^{\mathrm{Fr}_p}\otimes_{\bbQ_p}K \simeq H^{2\ast}_{\mathrm{crys}}(X)^{\frac{1}{p^\ast} \mathrm{Fr}_p}\otimes_{\bbQ_p}K \simeq H^{2\ast}_{\mathrm{crys}}(X)^{\frac{1}{(p^\ast)^n}\mathrm{Fr}_p^n}=H^{2\ast}_{\mathrm{crys}}(X)^{\frac{1}{q^\ast}\mathrm{Fr}_q}\,,$$
we conclude that the $p$-version of the Tate conjecture $T^p(X)$ is equivalent to the surjectivity of the induced $K$-linear cycle class map
\begin{equation}\label{eq:diagonal}
\cZ^\ast(X)_K \too H^{2\ast}_{\mathrm{crys}}(X)^{\frac{1}{q^\ast}\mathrm{Fr}_q}\,.
\end{equation}
On the one hand, since $\mathrm{char}(K)=0$, recall from \cite[\S18.3]{Fulton} that we have a natural isomorphism $K_0(\perf_\dg(X))_K \simeq \cZ^\ast(X)_K$. On the other hand, recall from \cite[Thm.~5.2]{CD-positive} that we have a natural isomorphism $TP_0(\perf_\dg(X))_{1/p} \simeq H^{2\ast}_{\mathrm{crys}}(X)$. Under these isomorphisms, the endomorphism $\varphi^n$ corresponds to the endomorphism $\frac{1}{q^\ast} \mathrm{Fr}_q$ (see \cite{Lars}) and the homomorphism \eqref{eq:diagonal} corresponds to the $K$-linear homomorphism \eqref{eq:cycle-class}. Consequently, \eqref{eq:diagonal} is surjective if and only if \eqref{eq:cycle-class} is surjective.

Let us now prove the equivalence $B(X) \Leftrightarrow B_{\mathrm{nc}}(\perf_\dg(X))$. Note first that since $\cD_c(\perf_\dg(X))\simeq \perf(X)$, the Euler bilinear pairing is given as follows:
\begin{eqnarray*}
\chi\colon K_0(X) \times K_0(X) \too \bbZ && ([\cF],[\cG]) \mapsto \sum_i (-1)^i \mathrm{dim}\, \Hom_{\perf(X)}(\cF,\cG[-i])\,.
\end{eqnarray*}
Recall from \cite[\S19]{Fulton} that an algebraic cycle $\beta\in \cZ^\ast(X)_\bbQ$ is {\em numerically equivalent to zero} if $\int_X \alpha \cdot \beta=0$ for every $\alpha \in \cZ^\ast(X)_\bbQ$. Recall also that we have the isomorphism
\begin{eqnarray}\label{eq:iso-key}
\tau\colon K_0(X)_\bbQ \stackrel{\sim}{\too} \cZ^\ast(X)_\bbQ && [\cF] \mapsto \mathrm{ch}(\cF)\cdot\sqrt{\mathrm{Td}}_X\,,
\end{eqnarray}
where $\mathrm{ch}(\cF)$ stands for the Chern character of $\cF$ and $\sqrt{\mathrm{Td}}_X$ for the square root of the Todd class of $X$; see \cite[\S18.3]{Fulton}. Given any two perfect complexes $\cF, \cG \in \perf(X)$, the Hirzebruch-Riemann-Roch theorem (see \cite[Cor.~18.3.1]{Fulton}) yields the equality
\begin{equation}\label{eq:equality1}
\mathrm{Eu}(\pi_\ast(\cF^\vee \otimes_{\cO_X} \cG))=\int_X \tau([\cF^\vee])\cdot \tau([\cG])\,,
\end{equation}
where $\mathrm{Eu}$ denotes the Euler characteristic and $\pi\colon X \to \mathrm{Spec}(k)$ denotes the structural morphism of $X$. Since $\cF^\vee \otimes_{\cO_X} \cG\simeq \uHom(\cF,\cG)$, where $\uHom(-,-)$ stands for the internal Hom of the rigid symmetric monoidal category $\perf(X)$, we hence conclude that $\mathrm{Eu}(\pi_\ast (\uHom(\cF,\cG)))$=\eqref{eq:equality1} agrees with $\chi([\cF],[\cG])$. This implies that the above isomorphism \eqref{eq:iso-key} descends to the numerical quotients:
\begin{equation}\label{eq:square}
\xymatrix{
K_0(X)_\bbQ \ar@{->>}[d] \ar[r]^-{\tau}_-{\sim} & \cZ^\ast(X)_\bbQ \ar@{->>}[d] \\
K_0(X)_\bbQ/_{\!\sim \mathrm{num}} \ar[r]_-{\tau}^-{\sim} & \cZ^\ast(X)_\bbQ/_{\!\sim\mathrm{num}}\,.
}
\end{equation}
Consequently, the proof of the equivalence $B(X) \Leftrightarrow B_{\mathrm{nc}}(\perf_\dg(X))$ follows now from the fact that $B(X)$, resp. $B_{\mathrm{nc}}(\perf_\dg(X))$, is equivalent to the injectivity of the vertical homomorphism on the right-hand side, resp. left hand-side, of \eqref{eq:square}.

Finally, the proof of the equivalence $P(X) \Leftrightarrow P_{\mathrm{nc}}(\perf_\dg(X))$ is clear.
\section{Proof of Theorem \ref{thm:main}}
By definition of the Lefschetz decomposition $\langle \bbA_0, \bbA_1(1), \ldots, \bbA_{i-1}(i-1)\rangle$, we have a chain of admissible triangulated subcategories $\bbA_{i-1}\subseteq \cdots \subseteq \bbA_1\subseteq \bbA_0$ with $\bbA_r(r):=\bbA_r \otimes \cL_X(r)$. Note that $\bbA_r(r)\simeq \bbA_r$. Let $\mathfrak{a}_r$ be the right orthogonal complement to $\bbA_{r+1}$ in $\bbA_r$; these are called the {\em primitive subcategories} in \cite[\S4]{Kuznetsov-IHES}. By construction, we have the following semi-orthogonal decompositions:
\begin{eqnarray}\label{eq:decomp1}
\bbA_r = \langle \mathfrak{a}_r, \mathfrak{a}_{r+1}, \ldots, \mathfrak{a}_{i-1} \rangle && 0\leq r \leq i-1\,.
\end{eqnarray}
As proved in \cite[Thm.~6.3]{Kuznetsov-IHES} (see also \cite[Thm.~2.3.4]{Bernardara}), the category $\perf(Y)$ admits a HP-dual Lefschetz decomposition $\langle \bbB_{j-1}(1-j), \bbB_{j-2}(2-j), \ldots, \bbB_0\rangle$ with respect to $\cL_Y(1)$; as above, we have a chain of admissible triangulated subcategories $\bbB_{j-1} \subseteq \bbB_{j-2} \subseteq \cdots \subseteq \bbB_0$. Moreover, the primitive subcategories coincide (via a Fourier-Mukai type functor) with those of $\perf(X)$ and we have the following semi-orthogonal decompositions:
\begin{eqnarray}\label{eq:decomp2}
\bbB_r=\langle \mathfrak{a}_0, \mathfrak{a}_1, \ldots, \mathfrak{a}_{\mathrm{dim}(V)-r-2}\rangle && 0 \leq r \leq j-1\,.
\end{eqnarray}
Furthermore, the assumptions $\mathrm{dim}(X_L)=\mathrm{dim}(X)-\mathrm{dim}(L)$ and $\mathrm{dim}(Y_L)=\mathrm{dim}(Y) - \mathrm{dim}(L^\perp)$ imply the existence of semi-orthogonal decompositions
\begin{equation}\label{eq:semi-1}
\perf(X_L)=\langle \bbC_L, \bbA_{\mathrm{dim}(V)}(1), \ldots, \bbA_{i-1}(i-\mathrm{dim}(V))\rangle 
\end{equation}
\begin{equation}\label{eq:semi-2}
\perf(Y_L)=\langle \bbB_{j-1}(\mathrm{dim}(L^\perp)-j), \ldots, \bbB_{\mathrm{dim}(L^\perp)}(-1), \bbC_L \rangle\,,
\end{equation}
where $\bbC_L$ is a common (triangulated) category. Let us denote, respectively, by $\bbC_L^\dg$, $\bbA_r^\dg$, and $\mathfrak{a}_r^\dg$, the dg enhancement of $\bbC_L$, $\bbA_r$, and $\mathfrak{a}_r$, induced from $\perf_\dg(X_L)$. Similarly, let us denote by $\bbC_L^{\dg'}$ and $\bbB_r^{\dg}$ the dg enhancement of $\bbC_L$ and $\bbB_r$ induced from $\perf_\dg(Y_L)$. Note that since by assumption the $k$-schemes $X_L$ and $Y_L$ are smooth (and projective), all the above dg categories are smooth (and proper). 

Let us now prove the equivalence $T^l(X_L)\Leftrightarrow T^l(Y_L)$. Consider the functors 
\begin{eqnarray}\label{eq:additive}
E_m\colon \dgcat(k) \too \mathrm{Mod}(\bbZ) && \cA \mapsto \pi_{-1} L_{KU} K(\cA\otimes_{\bbF_q} \bbF_{q^m})
\end{eqnarray}
with values in the additive category of abelian groups. 
\begin{proposition}\label{prop:additive}
The functors \eqref{eq:additive} are additive invariants.
\end{proposition}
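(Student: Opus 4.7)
The plan is to exhibit each functor $E_m$ as a four-step composition and verify that every stage either preserves additive invariance or is already known to be one. Concretely, write
\[
E_m\colon \dgcat(k) \xrightarrow{\,(-)\otimes_{\bbF_q}\bbF_{q^m}\,} \dgcat(\bbF_{q^m}) \xrightarrow{\,K\,} \Spt \xrightarrow{\,L_{KU}\,} \Spt \xrightarrow{\,\pi_{-1}\,} \mathrm{Mod}(\bbZ).
\]
My strategy is to check, for each arrow, the appropriate compatibility with Morita equivalences and with the upper-triangular construction $T(\cA,\cB;\mathrm{B})$, and then to assemble the pieces.

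First I would treat base change along the finite separable (in particular, flat) extension $\bbF_q \to \bbF_{q^m}$. A Morita equivalence $\cA \to \cA'$ becomes, after tensoring with $\bbF_{q^m}$, a dg functor whose derived extension-of-scalars is again an equivalence, since $-\otimes_{\bbF_q}\bbF_{q^m}$ is exact and commutes with the formation of $\cD(\cA)$. Next, inspection of objects and hom-complexes shows directly that
\[
T(\cA,\cB;\mathrm{B})\otimes_{\bbF_q}\bbF_{q^m} \;\cong\; T\bigl(\cA\otimes_{\bbF_q}\bbF_{q^m},\,\cB\otimes_{\bbF_q}\bbF_{q^m};\,\mathrm{B}\otimes_{\bbF_q}\bbF_{q^m}\bigr),
\]
with $\iota_\cA\otimes\bbF_{q^m}$ and $\iota_\cB\otimes\bbF_{q^m}$ matching the tautological inclusions on the right. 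Hence base change transports both defining properties of an additive invariant through the composition.

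Next I would invoke the classical fact that algebraic $K$-theory of dg categories $K\colon \dgcat \to \Spt$ is itself an additive invariant (in the spectrum-valued sense): Morita equivalences induce equivalences of $K$-theory spectra, and Waldhausen's additivity theorem, in the form adapted to dg categories by Keller, yields a canonical equivalence $K(T(\cA,\cB;\mathrm{B})) \simeq K(\cA) \oplus K(\cB)$ induced by $\iota_\cA$ and $\iota_\cB$. Composing with the previous step, we find that $K((-)\otimes_{\bbF_q}\bbF_{q^m})$ already satisfies (i) and (ii) at the level of spectra.

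Finally, I would observe that $L_{KU}\colon \Spt \to \Spt$ is a Bousfield localization: it preserves weak equivalences by construction, and being a left adjoint on the stable homotopy category it commutes with finite direct sums, so $L_{KU}(K(\cA)\oplus K(\cB)) \simeq L_{KU}K(\cA)\oplus L_{KU}K(\cB)$. Applying the additive functor $\pi_{-1}\colon \Spt \to \mathrm{Mod}(\bbZ)$, which sends weak equivalences to isomorphisms and commutes with finite products, concludes both (i) and (ii) for $E_m$. The only step that is not essentially a bookkeeping argument is the additivity of $K$-theory for dg categories, so I expect that to be the main technical point to cite; everything else reduces to checking that base change, localization, and homotopy groups preserve the relevant structure.
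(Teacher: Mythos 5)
Your proposal is correct and follows essentially the same route as the paper: base change along $\bbF_q\to\bbF_{q^m}$ preserves Morita equivalences and commutes with the construction $T(\cA,\cB;\mathrm{B})$, algebraic $K$-theory is itself an additive invariant, and $L_{KU}$ and $\pi_{-1}$ preserve the resulting equivalences and finite direct sums. The only difference is cosmetic: you make explicit the identification $T(\cA,\cB;\mathrm{B})\otimes_{\bbF_q}\bbF_{q^m}\cong T(\cA\otimes_{\bbF_q}\bbF_{q^m},\cB\otimes_{\bbF_q}\bbF_{q^m};\mathrm{B}\otimes_{\bbF_q}\bbF_{q^m})$ and the exactness of the last two stages, which the paper leaves implicit, while the paper cites a reference for the preservation of Morita equivalences under base change where you argue it directly.
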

\begin{proof}
Let $F\colon \cA \to \cB$ be a Morita equivalence and $m\geq 1$ an integer. As proved in \cite[Prop.~7.1]{Artin}, the induced dg functor $F\otimes_{\bbF_q} \bbF_{q^m} \colon \cA\otimes_{\bbF_q} \bbF_{q^m} \to \cB \otimes_{\bbF_q} \bbF_{q^m}$ is also a Morita equivalence. Therefore, since algebraic $K$-theory inverts Morita equivalences (see \cite[\S2.2.1]{book}), the homomorphism $K(F\otimes_{\bbF_q} \bbF_{q^m}) \colon K(\cA\otimes_{\bbF_q} \bbF_{q^m}) \to K(\cB \otimes_{\bbF_q} \bbF_{q^m})$ is invertible. By definition of the above functors \eqref{eq:additive}, we hence conclude that the induced group homomorphism $E_m(\cA) \to E_m(\cB)$ is also invertible.

Now, let $\cA$ and $\cB$ be two dg categories and $\mathrm{B}$ a dg $\cA\text{-}\cB$-bimodule. Following \S\ref{sub:additive}, we need to show that the dg functors $\iota_\cA$ and $\iota_B$ induce an isomorphism
\begin{equation}\label{eq:iso-induced}
E_m(\cA) \oplus E_m(\cB) \too E_m(T(\cA,\cB;\mathrm{B}))\,.
\end{equation}
Consider the dg categories $\cA\otimes_{\bbF_q} \bbF_{q^m}$ and $\cB \otimes_{\bbF_q} \bbF_{q^m}$ and the dg bimodule $\mathrm{B}\otimes_{\bbF_q} \bbF_{q^m}$. Since algebraic $K$-theory is an additive invariant of dg categories, the dg functors $\iota_{\cA\otimes_{\bbF_q} \bbF_{q^m}}$ and $\iota_{\cB \otimes_{\bbF_q} \bbF_{q^m}}$ induce an isomorphism
\begin{equation}\label{eq:isom-last}
K(\cA\otimes_{\bbF_q} \bbF_{q^m})\oplus K(\cB \otimes_{\bbF_q} \bbF_{q^m}) \stackrel{\sim}{\too} K(T(\cA\otimes_{\bbF_q} \bbF_{q^m}, \cB\otimes_{\bbF_q} \bbF_{q^m}; \mathrm{B}\otimes_{\bbF_q} \bbF_{q^m}))\,.
\end{equation}
Therefore, by definition of the above functors \eqref{eq:additive}, we conclude from \eqref{eq:isom-last} that the homomorphism \eqref{eq:iso-induced} is also invertible. This finishes the proof.
\end{proof}
Thanks to Proposition \ref{prop:additive}, the functors \eqref{eq:additive} are additive invariants. As explained in \cite[Prop.~2.2]{book}, this implies that the above semi-orthogonal decompositions \eqref{eq:semi-1}-\eqref{eq:semi-2} give rise to direct sum decompositions of abelian groups:
\begin{equation}\label{eq:group1} 
E_m(\perf_\dg(X_L)) \simeq E_m(\bbC^\dg_L) \oplus E_m(\bbA^\dg_{\mathrm{dim}(V)}) \oplus \cdots \oplus E_m(\bbA^\dg_{i-1})\end{equation}
\begin{equation}\label{eq:group2}
E_m(\perf_\dg(Y_L)) \simeq E_m(\bbB_{j-1}^\dg) \oplus \cdots \oplus E_m(\bbB^\dg_{\mathrm{dim}(L^\perp)}) \oplus E_m(\bbC_L^{\dg'})\,.
\end{equation}
Consequently, by applying the functor $\mathrm{Hom}(\bbZ(l^\infty), -)$ to the direct sum decompositions \eqref{eq:group1}-\eqref{eq:group2}, we obtain the following equivalences of conjectures:
\begin{equation}\label{eq:conjecture1}
T^l_{\mathrm{nc}}(\perf_\dg(X_L))\Leftrightarrow T^l_{\mathrm{nc}}(\bbC_L^\dg)+ T^l_{\mathrm{nc}}(\bbA^\dg_{\mathrm{dim}(V)}) + \cdots + T^l_{\mathrm{nc}}(\bbA^\dg_{i-1})
\end{equation}
\begin{equation}\label{eq:conjecture2}
T^l_{\mathrm{nc}}(\perf_\dg(Y_L))\Leftrightarrow T^l_{\mathrm{nc}}(\bbB^\dg_{j-1}) + \cdots + T^l_{\mathrm{nc}}(\bbB^\dg_{\mathrm{dim}(L^\perp)}) + T^l_{\mathrm{nc}}(\bbC^{\dg'}_L)\,.
\end{equation}
On the one hand, since by assumption the conjecture $T^l_{\mathrm{nc}}(\bbA_0^\dg)$ holds, we conclude from the above semi-orthogonal decompositions \eqref{eq:decomp1}-\eqref{eq:decomp2} that the conjectures $T^l_{\mathrm{nc}}(\bbA^\dg_r)$ and $T^l_{\mathrm{nc}}(\bbB^\dg_r)$, with $0 \leq r \leq i-1$, also hold. This implies that the right-hand side of \eqref{eq:conjecture1}, resp. \eqref{eq:conjecture2}, reduces to the conjecture $T^l_{\mathrm{nc}}(\bbC^\dg_L)$, resp. $T^l_{\mathrm{nc}}(\bbC_L^{\dg'})$. On the other hand, since the functor $\perf(X_L) \to \bbC_L \to \perf(Y_L)$ is of Fourier-Mukai type, the dg categories $\bbC_L^\dg$ and $\bbC_L^{\dg'}$ are Morita equivalent. Using the fact that the functors \eqref{eq:additive} invert Morita equivalences, we hence conclude that $T^l_{\mathrm{nc}}(\bbC^\dg_L)\Leftrightarrow T^l_{\mathrm{nc}}(\bbC_L^{\dg'})$. Consequently, the proof follows now from the equivalences $T^l(X_L) \Leftrightarrow T^l_{\mathrm{nc}}(\perf_\dg(X_L))$ and $T^l(Y_L) \Leftrightarrow T^l_{\mathrm{nc}}(\perf_\dg(Y_L))$ of Theorem \ref{thm:main}.

Let us now prove the equivalence $T^p(X_L)\Leftrightarrow T^p(Y_L)$. As explained in \cite[Prop.~2.2]{book}, since the functor \eqref{eq:universal} is an additive invariant, the above semi-orthogonal decomposition \eqref{eq:semi-1} gives rise to the following direct sum decomposition
\begin{equation}\label{eq:decomp-1}
U(\perf_\dg(X_L))_K \simeq U(\bbC^\dg_L)_K \oplus U(\bbA^\dg_{\mathrm{dim}(V)})_K \oplus \cdots \oplus U(\bbA^\dg_{i-1})_K
\end{equation}
in the $K$-linearized category $\Hmo_0(k)_K$. Recall from the proof of Lemma \ref{lem:factorization} that the functor \eqref{eq:TP} comes equipped with the natural transformation $\varphi^n$. Therefore, by applying the $K$-linear functor \eqref{eq:TP} to the above direct sum decomposition \eqref{eq:decomp-1}, we conclude that the induced $K$-linear homomorphism 
$$K_0(\perf_\dg(X_L))_K \too TP_0(\perf_\dg(X_L))_{1/p}^{\varphi^n}$$
identifies with the induced (diagonal) $K$-linear homomorphism
$$ K_0(\bbC_L^\dg) \oplus \oplus_{r=\mathrm{dim}(V)}^{i-1} K_0(\bbA^\dg_r)_K \too TP_0(\bbC_L^\dg)^{\varphi^n}_{1/p} \oplus \oplus_{r=\mathrm{dim}(V)}^{i-1} TP_0(\bbA^\dg_r)^{\varphi^n}_{1/p}\,.$$
This implies the following equivalence of conjectures:
$$ T^p_{\mathrm{nc}}(\perf_\dg(X_L))\Leftrightarrow T^p_{\mathrm{nc}}(\bbC_L^\dg)+ T^p_{\mathrm{nc}}(\bbA^\dg_{\mathrm{dim}(V)}) + \cdots + T^p_{\mathrm{nc}}(\bbA^\dg_{i-1})\,.$$
All the above holds {\em mutatis mutandis} with $X_L$ replaced by $Y_L$. Consequently, the above semi-orthogonal decomposition \eqref{eq:semi-2} leads to the equivalence of conjectures
$$T^p_{\mathrm{nc}}(\perf_\dg(Y_L))\Leftrightarrow T^p_{\mathrm{nc}}(\bbB^\dg_{j-1}) + \cdots + T^p_{\mathrm{nc}}(\bbB^\dg_{\mathrm{dim}(L^\perp)}) + T^p_{\mathrm{nc}}(\bbC^{\dg'}_L)\,.$$
The remainder of the proof is now similar to the proof of $T^l(X_L)\Leftrightarrow T^l(Y_L)$.

Let us now prove the equivalence $B(X_L)\Leftrightarrow B(Y_L)$. As above, the semi-orthogonal decompositions \eqref{eq:semi-1}-\eqref{eq:semi-2} give rise to the following direct sum decompositions
\begin{equation}\label{eq:decomp11}
U(\perf_\dg(X_L))_\bbQ \simeq U(\bbC^\dg_L)_\bbQ \oplus U(\bbA^\dg_{\mathrm{dim}(V)})_\bbQ \oplus \cdots \oplus U(\bbA^\dg_{i-1})_\bbQ
\end{equation}
\begin{equation}\label{eq:decomp22}
U(\perf_\dg(Y_L))_\bbQ \simeq U(\bbB_{j-1}^\dg)_\bbQ \oplus \cdots \oplus U(\bbB^\dg_{\mathrm{dim}(L^\perp)})_\bbQ \oplus U(\bbC_L^{\dg'})_\bbQ
\end{equation}
in the $\bbQ$-linearized category $\Hmo_0(k)_\bbQ$. As proved in \cite[\S6]{positive}, given any smooth proper dg category $\cA$, we have a natural isomorphism: 
\begin{equation}\label{eq:numerical}
\Hom_{\NNum(k)_\bbQ}(U(k)_\bbQ, U(\cA)_\bbQ)\simeq K_0(\cA)_\bbQ/_{\!\sim \mathrm{num}}\,.
\end{equation}
Hence, by applying $\Hom_{\NChow(k)_\bbQ}(U(k)_\bbQ,-)$ and $\Hom_{\NNum(k)_\bbQ}(U(k)_\bbQ,-)$ to the direct sum decompositions \eqref{eq:decomp11}-\eqref{eq:decomp22} we obtain the equivalences of conjectures:
\begin{equation*}
B_{\mathrm{nc}}(\perf_\dg(X_L))\Leftrightarrow B_{\mathrm{nc}}(\bbC_L^\dg)+ B_{\mathrm{nc}}(\bbA^\dg_{\mathrm{dim}(V)}) + \cdots + B_{\mathrm{nc}}(\bbA^\dg_{i-1})
\end{equation*}
\begin{equation*}
B_{\mathrm{nc}}(\perf_\dg(Y_L))\Leftrightarrow B_{\mathrm{nc}}(\bbB^\dg_{j-1}) + \cdots + B_{\mathrm{nc}}(\bbB^\dg_{\mathrm{dim}(L^\perp)}) + B_{\mathrm{nc}}(\bbC^{\dg'}_L)\,.
\end{equation*}
The remainder of the proof is now similar to the proof of $T^l(X_L)\Leftrightarrow T^l(Y_L)$.

Finally, let us prove the equivalence $P(X_L) \Leftrightarrow P(Y_L)$. Consider the functors
\begin{eqnarray}\label{eq:K-theory}
K_i(-)_\bbQ\colon \dgcat(k) \too \mathrm{Vect}(\bbQ) && \cA \mapsto K_i(\cA)_\bbQ
\end{eqnarray}
with values in the category of $\bbQ$-vector spaces. As explained in \cite[\S2.2.1]{book}, these functors are additive invariants. Therefore, a proof similar to the one of the equivalence $T^l(X_L)\Leftrightarrow T^l(Y_L)$ allows us to conclude that $P(X_L) \Leftrightarrow P(Y_L)$.
\section{Proof of Proposition \ref{prop:key}}
As proved in \cite[Thms.~1.3 and 1.7]{VdB}, the dg category $\perf_\dg(\mathrm{Gr}(r,U_1))$ is Morita equivalent to a finite dimensional $k$-algebra of finite global dimension $A$. Since $\bbA_0^\dg=\perf_\dg(\mathrm{Gr}(r,U_1))$, we hence obtain the following equivalences of conjectures:
\begin{eqnarray*}
T^l_{\mathrm{nc}}(\bbA_0^\dg) \Leftrightarrow T^l_{\mathrm{nc}}(A) && T^p_{\mathrm{nc}}(\bbA_0^\dg) \Leftrightarrow T^p_{\mathrm{nc}}(A) \\ 
B_{\mathrm{nc}}(\bbA_0^\dg) \Leftrightarrow B_{\mathrm{nc}}(A) && P_{\mathrm{nc}}(\bbA_0^\dg) \Leftrightarrow P_{\mathrm{nc}}(A)\,.
\end{eqnarray*}
We start by proving the conjecture $T^l_{\mathrm{nc}}(A)$. Recall that every finite field $k$ is perfect. Therefore, using the fact that the above functors \eqref{eq:additive} are additive invariants, \cite[Thm.~3.15]{Azumaya} implies that $E_m(A)\simeq E_m(A/J(A)), m\geq 1$, where $J(A)$ stands for the Jacobson radical of $A$. By applying the functor $\Hom(\bbZ(l^\infty),-)$ to these latter isomorphisms, we hence conclude that $T^l_{\mathrm{nc}}(A)\Leftrightarrow T^l_{\mathrm{nc}}(A/J(A))$. Now, let us write $V_1, \ldots, V_s$ for the simple (right) $A/J(A)$-modules and $D_1:=\mathrm{End}_{A/J(A)}(V_1), \ldots, D_s:=\mathrm{End}_{A/J(A)}(V_s)$ the associated division $k$-algebras. Thanks to the Artin-Wedderburn theorem, the quotient $A/J(A)$ is Morita equivalent to $D_1 \times \cdots \times D_s$. Moreover, the center $Z_i$ of $D_i$ is a finite field extension of $k$ and $D_i$ is a central simple $Z_i$-algebra. Since the Brauer group of a finite field $k$ is trivial, this implies that $D_1 \times \cdots \times D_s$ is Morita equivalent to $Z_1 \times \cdots \times Z_s$. Consequently, we obtain the following equivalences:
$$
T^l_{\mathrm{nc}}(A) \Leftrightarrow T^l_{\mathrm{nc}}(A/J(A)) \Leftrightarrow T^l_{\mathrm{nc}}(D_1) + \cdots + T^l_{\mathrm{nc}}(D_s) \Leftrightarrow T^l(Z_1) + \cdots + T^l(Z_s)\,.
$$
The proof of the conjecture $T^l_{\mathrm{nc}}(A)$ follows now from the fact that the conjectures $T^l(Z_i), 1\leq i \leq s$, hold because $\mathrm{dim}(Z_i)=0$.

Let us now prove the conjecture $T^p_{\mathrm{nc}}(A)$. Since the functor \eqref{eq:universal} is an additive invariant, \cite[Thm.~3.15]{Azumaya} implies that $U(A)_K\simeq U(A/J(A))_K$ in $\Hmo_0(k)_K$. Recall from the proof of Lemma \ref{lem:factorization} that the functor \eqref{eq:TP} comes equipped with the natural transformation $\varphi^n$. Therefore, by applying the $K$-linear functor \eqref{eq:TP} to the latter isomorphism, we obtain the following identification:
$$ \left(K_0(A)_K \too TP_0(A)^{\varphi^n}_{1/p}\right) \simeq \left(K_0(A/J(A))_K \too TP_0(A/J(A))^{\varphi^n}_{1/p}\right)\,.$$
This implies the equivalence of conjectures $T^p_{\mathrm{nc}}(A) \Leftrightarrow T^p_{\mathrm{nc}}(A/J(A))$. The proof of the conjecture $T^p_{\mathrm{nc}}(A/J(A))$ is now similar to the proof of $T^l_{\mathrm{nc}}(A/J(A))$. 

Let us now prove the conjecture $B_{\mathrm{nc}}(A)$. As above, we have an isomorphism $U(A)_\bbQ\simeq U(A/J(A))_\bbQ$ in $\Hmo_0(k)_\bbQ$. Thanks to the natural isomorphisms \eqref{eq:isos} and \eqref{eq:numerical}, by applying $\Hom_{\NChow(k)_\bbQ}(U(k)_\bbQ, -)$ and $\Hom_{\NNum(k)_\bbQ}(U(k)_\bbQ, -)$ to the latter isomorphism, we hence conclude that $B_{\mathrm{nc}}(A)\Leftrightarrow B_{\mathrm{nc}}(A/J(A))$. The proof of the conjecture $B_{\mathrm{nc}}(A/J(A))$ is now similar to the proof of $T^l_{\mathrm{nc}}(A/J(A))$.

Finally, since the above functors \eqref{eq:K-theory} are additive invariants, the proof of the conjecture $P_{\mathrm{nc}}(A)$ is similar to the proof of $T^l_{\mathrm{nc}}(A)$.
\section{Proof of Theorem \ref{thm:two}}\label{sec:proof}
We assume first that $d$ is even. Following \cite[\S3.5]{Kuznetsov-quadrics} (see also \cite[\S1.6]{Bernardara}), let $\cZ$ be the center of $\mathcal{C}l_0(q)_{|L}$ and $\mathrm{Spec}(\cZ)=:\widetilde{\bbP}(L) \to \bbP(L)$ the {\em discriminant cover} of $\bbP(L)$. As explained in {\em loc. cit.}, $\widetilde{\bbP}(L) \to \bbP(L)$ is a $2$-fold cover which is ramified over the divisor $D:=\bbP(L) \cap \Delta_1$. Since by assumption $\mathrm{dim}(L)=2$, we have $\mathrm{dim}(D)=0$. Consequently, since $D$ is smooth, $\widetilde{\bbP}(L)$ is also smooth. Let us write $\cF$ for the sheaf of noncommutative algebras $\mathcal{C}l_0(q)_{|L}$ considered as a sheaf of noncommutative algebras over $\widetilde{\bbP}(L)$. As proved in {\em loc. cit.}, since by assumption $\bbP(L)\cap \Delta_2=\emptyset$, $\cF$ is a sheaf of Azumaya algebras over $\widetilde{\bbP}(L)$ of rank $2^{(d/2)-1}$. Moreover, the category $\perf(\bbP(L); \mathcal{C}l_0(q)_{|L})$ is equivalent (via a Fourier-Mukai type functor) to $\perf(\widetilde{\bbP}(L); \cF)$. This leads to a Morita equivalence between the dg categories $\perf_\dg(\bbP(L); \mathcal{C}l_0(q)_{|L})$ and $\perf_\dg(\widetilde{\bbP}(L); \cF)$. Consequently, making use of Corollary \ref{cor:main}, we obtain the following equivalences of conjectures:
\begin{eqnarray}
& T^l(X_L) \Leftrightarrow T^l_{\mathrm{nc}}(\perf_\dg(\widetilde{\bbP}(L); \cF)) & T^p(X_L) \Leftrightarrow T^p_{\mathrm{nc}}(\perf_\dg(\widetilde{\bbP}(L); \cF))  \label{eq:equivalence-new}\\
& B(X_L)  \Leftrightarrow B_{\mathrm{nc}}(\perf_\dg(\widetilde{\bbP}(L); \cF)) & P(X_L) \Leftrightarrow P_{\mathrm{nc}}(\perf_\dg(\widetilde{\bbP}(L); \cF)) \label{eq:equivalence-new111}\,.
\end{eqnarray}
Since by assumption $\mathrm{dim}(L)=2$, the $2$-fold cover $\widetilde{\bbP}(L)$ is a smooth projective curve. Using the fact that the Brauer group of every smooth curve over a finite field $k$ is trivial (see \cite[page 109]{Milne}), we hence conclude that the right-hand side conjectures in \eqref{eq:equivalence-new}-\eqref{eq:equivalence-new111} are equivalent to $T^l(\widetilde{\bbP}(L))$, $T^p(\widetilde{\bbP}(L))$, $B(\widetilde{\bbP}(L))$, and $P(\widetilde{\bbP}(L))$, respectively. The proof follows now from the fact that the Tate, Beilinson and Parshin conjectures hold for smooth projective curves. 

We now assume that $d$ is odd and that $p\neq 2$. Following \cite[\S3.6]{Kuznetsov-quadrics} (see also \cite[\S1.7]{Bernardara}), let $\widehat{\bbP}(L)$ be the {\em discriminant stack} associated to the pull-back $q_{|L}$ along $\bbP(L) \subset \bbP(S^2W^\ast)$ of the flat quadric fibration $q\colon \cH \to \bbP(S^2W^\ast)$. As explained in {\em loc. cit.}, since by assumption $1/2 \in k$, $\widehat{\bbP}(L)$ is a smooth Deligne-Mumford stack. Moreover, using the fact that $\widehat{\bbP}(L)$ is a square root stack and that the critical locus of the flat quadric fibration $q_{|L}$ is the divisor $D$, we conclude from \cite[Thm.~1.6]{Ueda} that $\perf(\widehat{\bbP}(L))=\langle \perf(D), \perf(\bbP(L))\rangle$. Consequently, an argument similar to the one used in the proof of Theorem \ref{thm:main} yields the following equivalences of conjectures:
\begin{eqnarray}
T^l_{\mathrm{nc}}(\perf_\dg(\widehat{\bbP}(L))) & \Leftrightarrow & T^l(D) + T^l(\bbP(L)) \label{eq:aux1} \\
T^p_{\mathrm{nc}}(\perf_\dg(\widehat{\bbP}(L))) & \Leftrightarrow & T^p(D) + T^p(\bbP(L)) \label{eq:aux11} \\
B_{\mathrm{nc}}(\perf_\dg(\widehat{\bbP}(L))) & \Leftrightarrow & B(D) + B(\bbP(L))  \label{eq:aux2} \\
P_{\mathrm{nc}}(\perf_\dg(\widehat{\bbP}(L))) & \Leftrightarrow & P(D) + P(\bbP(L))\,. \label{eq:aux3}
\end{eqnarray} 
Let us write $\cF$ for the sheaf of noncommutative algebras $\mathcal{C}l_0(q)_{|L}$ considered as a sheaf of noncommutative algebras over $\widehat{\bbP}(L)$. As proved in \cite[\S3.6]{Kuznetsov-quadrics} (see also \cite[\S1.7]{Bernardara}), since by assumption $\bbP(L) \cap \Delta_2=\emptyset$, $\cF$ is a sheaf of Azumaya algebras over $\widehat{\bbP}(L)$. Moreover, the category $\perf(\bbP(L); \mathcal{C}l_0(q)_{|L})$ is equivalent (via a Fourier-Mukai type functor) to $\perf(\widehat{\bbP}(L); \cF)$. This leads to a Morita equivalence between the dg categories $\perf_\dg(\bbP(L); \mathcal{C}l_0(q)_{|L})$ and $\perf_\dg(\widehat{\bbP}(L); \cF)$. Making use of Corollary \ref{cor:main}, we hence obtain the following equivalences of conjectures:
\begin{eqnarray}
& T^l(X_L)  \Leftrightarrow   T^l_{\mathrm{nc}}(\perf_\dg(\widehat{\bbP}(L); \cF)) & T^p(X_L) \Leftrightarrow  T^p_{\mathrm{nc}}(\perf_\dg(\widehat{\bbP}(L); \cF))\label{eq:equivalence-new1}\\
& B(X_L)  \Leftrightarrow  B_{\mathrm{nc}}(\perf_\dg(\widehat{\bbP}(L); \cF)) & P(X_L) \Leftrightarrow  P_{\mathrm{nc}}(\perf_\dg(\widehat{\bbP}(L); \cF)) \label{eq:equivalence-new11}\,.
\end{eqnarray}
Since by assumption $\mathrm{dim}(L)=2$, we have $\mathrm{dim}(\bbP(L))=1$. Using the fact that the Brauer group of every smooth curve over a finite field $k$ is trivial, we hence conclude that in \eqref{eq:equivalence-new1}-\eqref{eq:equivalence-new11} we can replace $\perf_\dg(\widehat{\bbP}(L); \cF)$ by $\perf_\dg(\widehat{\bbP}(L))$. Consequently, since $\mathrm{dim}(D)=0$, the proof follows now from the combination of \eqref{eq:aux1}-\eqref{eq:aux3} with the fact that the Tate, Beilinson and Parshin conjectures hold in dimensions $\leq 1$.
\subsection*{Intersection of even-dimensional quadrics}
Given a smooth proper dg category $\cA$, a prime number $l\neq p$, and an integer $s \geq 1$, consider the $\bbZ[1/s]$-modules
\begin{eqnarray}\label{eq:groups2}
\Hom(\bbZ(l^\infty), (\pi_{-1} L_{KU}K(\cA\otimes_{\bbF_q} \bbF_{q^m}))_{1/s})&& m \geq 1
\end{eqnarray} 
as well as the following variant of the noncommutative Tate conjecture:

\vspace{0.2cm}

Conjecture $T^l_{\mathrm{nc}}(\cA; 1/s)$: {\it The $\bbZ[1/s]$-modules \eqref{eq:groups2} are trivial}.

\begin{lemma}\label{lem:aux}
We have $T^l_{\mathrm{nc}}(\cA) \Leftrightarrow T^l_{\mathrm{nc}}(\cA;1/s)$ for every $l \nmid s$.
\end{lemma}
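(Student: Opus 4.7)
The plan is to reduce the lemma to a purely algebraic statement about $\Hom(\bbZ(l^\infty),-)$. Setting $M_m := \pi_{-1} L_{KU}K(\cA\otimes_{\bbF_q} \bbF_{q^m})$, the conjecture $T^l_{\mathrm{nc}}(\cA)$ asserts that $\Hom(\bbZ(l^\infty), M_m)=0$ for every $m\geq 1$, while $T^l_{\mathrm{nc}}(\cA;1/s)$ asserts that $\Hom(\bbZ(l^\infty), (M_m)_{1/s})=0$ for every $m\geq 1$. Hence it suffices to prove the following general claim: for any abelian group $M$ and any integer $s$ with $l\nmid s$, the localization map $M\to M_{1/s}$ induces an isomorphism
$$\Hom(\bbZ(l^\infty), M) \stackrel{\sim}{\too} \Hom(\bbZ(l^\infty), M_{1/s})\,.$$

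The key step is to exploit the presentation $\bbZ(l^\infty)=\colim_n \bbZ/l^n$ (with transition maps given by multiplication by $l$), which yields the natural identification $\Hom(\bbZ(l^\infty), M)\simeq \lim_n M[l^n]$, where $M[l^n]:=\ker(l^n\colon M \to M)$. It is therefore enough to show that the homomorphism $M[l^n] \to (M_{1/s})[l^n]$ is an isomorphism for every $n\geq 1$. For injectivity, an element $x\in M[l^n]$ that maps to zero in $M_{1/s}$ satisfies $s^Kx=0$ in $M$ for some $K\geq 0$; since $\gcd(l^n, s^K)=1$, this forces $x=0$. For surjectivity, given a class $x/s^N\in (M_{1/s})[l^n]$, there exists $K\geq 0$ such that $s^Kl^n x=0$ in $M$, which means that $s^Kx\in M[l^n]$; since $M[l^n]$ is a $\bbZ/l^n$-module and $s$ is a unit in $\bbZ/l^n$, multiplication by $s$ acts invertibly on $M[l^n]$, so there is a unique $y\in M[l^n]$ with $s^{N+K}y=s^Kx$, and this $y$ satisfies $y/1=x/s^N$ in $M_{1/s}$.

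Passing to the inverse limit over $n$ (which commutes with the isomorphisms above because they are natural in $n$ and compatible with the multiplication-by-$l$ transition maps) yields the claimed isomorphism $\Hom(\bbZ(l^\infty), M)\simeq \Hom(\bbZ(l^\infty), M_{1/s})$. Applying this to $M=M_m$ for each $m\geq 1$ establishes the equivalence $T^l_{\mathrm{nc}}(\cA)\Leftrightarrow T^l_{\mathrm{nc}}(\cA;1/s)$. There is no substantial obstacle here; the only point requiring care is the interplay between the $l$-primary torsion nature of $\bbZ(l^\infty)$ and the invertibility of $s$, but this is precisely what the coprimality assumption $l\nmid s$ encodes.
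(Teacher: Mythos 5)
Your proof is correct and follows essentially the same route as the paper: the paper likewise observes that, since $l\nmid s$, the localization maps $M_m\to (M_m)_{1/s}$ induce isomorphisms on $l$-power torsion and then passes to $l$-adic Tate modules, i.e.\ to $\Hom(\bbZ(l^\infty),-)=\lim_n(-)[l^n]$. You simply spell out the injectivity and surjectivity of $M[l^n]\to (M_{1/s})[l^n]$, which the paper leaves implicit.
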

\begin{proof}
Since by assumption $l \nmid s$, the localization homomorphisms
\begin{eqnarray*}
\pi_{-1} L_{KU} K(\cA\otimes_{\bbF_q} \bbF_{q^m}) \too (\pi_{-1} L_{KU} K(\cA\otimes_{\bbF_q} \bbF_{q^m}))_{1/s} && m \geq 1
\end{eqnarray*}
induce an isomorphism between all the $l$-power torsion subgroups. Consequently, by passing to the $l$-adic Tate modules, we conclude that the abelian groups \eqref{eq:groups11} are trivial if and only if the $\bbZ[1/s]$-modules \eqref{eq:groups2} are trivial.
\end{proof}
\begin{theorem}\label{prop:2}
Let $X_L$ be as in Corollary \ref{cor:main}. Assume that $\bbP(L) \cap \Delta_2 =\emptyset$, that the divisor $\bbP(L) \cap \Delta_1$ is smooth, and that $d$ is even. Under these assumptions, we have the following equivalences: 
\begin{eqnarray*}
& T^l(X_L) \Leftrightarrow T^l(\widetilde{\bbP}(L)) \,\,\, (\mathrm{for}\,\,\mathrm{every}\,\, l \neq 2) &\\
& T^p(X_L) \Leftrightarrow T^p(\widetilde{\bbP}(L)) \quad 
B(X_L) \Leftrightarrow B(\widetilde{\bbP}(L)) \quad P(X_L) \Leftrightarrow P(\widetilde{\bbP}(L))\,. &
\end{eqnarray*}
\end{theorem}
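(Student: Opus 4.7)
The plan is to parallel the even-dimensional case of the proof of Theorem \ref{thm:two}, but now handle the fact that $\mathrm{dim}(L)$ is unrestricted, so the discriminant $2$-cover $\widetilde{\bbP}(L)$ is no longer forced to be a curve and the Azumaya algebra $\cF$ of rank $2^{(d/2)-1}$ over it may be motivically nontrivial. Since this rank is a power of $2$, inverting $2$ will trivialize it in the noncommutative motivic sense, which is precisely why the statement excludes $l=2$ while $B$, $P$, and $T^p$ (formulated with $\bbQ$- or $K$-coefficients) remain unrestricted. First, I would apply Corollary \ref{cor:main} to convert each conjecture for $X_L$ into its noncommutative analogue for $\perf_\dg(\bbP(L);\mathcal{C}l_0(q)_{|L})$. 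Then, using that $d$ is even, $\bbP(L)\cap\Delta_2=\emptyset$, and $D:=\bbP(L)\cap\Delta_1$ is smooth, I would invoke the discriminant cover construction of \cite[\S3.5]{Kuznetsov-quadrics} exactly as in the proof of Theorem \ref{thm:two}: $\widetilde{\bbP}(L)$ is a smooth projective $k$-scheme, $\cF:=\mathcal{C}l_0(q)_{|L}$ becomes Azumaya of rank $2^{(d/2)-1}$ over $\widetilde{\bbP}(L)$, and there is a Morita equivalence $\perf_\dg(\bbP(L);\mathcal{C}l_0(q)_{|L})\simeq \perf_\dg(\widetilde{\bbP}(L);\cF)$.

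The crux is then to replace the twisted dg category $\perf_\dg(\widetilde{\bbP}(L);\cF)$ by the untwisted $\perf_\dg(\widetilde{\bbP}(L))$ inside each noncommutative conjecture. For $B$ and $P$ the relevant invariants factor through $\bbQ$-coefficients, and for $T^p$ through the characteristic-zero field $K$, so the rank $2^{(d/2)-1}$ is automatically invertible; in these cases I would invoke the Azumaya invariance of additive invariants (in the spirit of \cite{Azumaya}) to conclude that $U(\perf_\dg(\widetilde{\bbP}(L);\cF))$ and $U(\perf_\dg(\widetilde{\bbP}(L)))$ become isomorphic after the appropriate localization, and then run the universal-property arguments of Proposition \ref{prop:key} and Theorem \ref{thm:Thomason} to deduce the equivalence of conjectures. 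For $T^l$ with $l\neq 2$, I would first invoke Lemma \ref{lem:aux} with $s=2^{(d/2)-1}$ (permissible since $l\nmid s$) to replace $T^l_{\mathrm{nc}}$ by its localized variant $T^l_{\mathrm{nc}}(-;1/s)$, after which the same Azumaya-invariance argument applies. A final appeal to Theorem \ref{thm:Thomason} then converts the noncommutative conjectures for $\perf_\dg(\widetilde{\bbP}(L))$ back into the geometric conjectures for $\widetilde{\bbP}(L)$.

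The main obstacle is the Azumaya-invariance step: asserting that for a sheaf of Azumaya algebras $\cF$ of rank $r$ on a smooth projective $k$-scheme $Y$, the additive invariants of $\perf_\dg(Y;\cF)$ coincide with those of $\perf_\dg(Y)$ after inverting $r$. This must be verified separately for each of $L_{KU}K$, $TP_0(-)_{1/p}^{\varphi^n}$, $K_0(-)_\bbQ/_{\!\sim\mathrm{num}}$, and $K_i(-)_\bbQ$; the $T^p$ case is the most delicate since it requires compatibility between the cyclotomic Frobenius $\varphi^n$ and the rank-inverted descent from $\cF$ to $\cO_{\widetilde{\bbP}(L)}$.
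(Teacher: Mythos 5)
Your proposal is correct and follows essentially the same route as the paper: Corollary \ref{cor:main} plus the discriminant-cover Morita equivalence reduce everything to $\perf_\dg(\widetilde{\bbP}(L);\cF)$, and then \cite[Thm.~2.1]{Azumaya} applied to the relevant additive invariants (with $2$ inverted for $T^l$ via Lemma \ref{lem:aux}, and with $\bbQ$- or $K$-coefficients for $B$, $P$, $T^p$, using naturality of $\varphi^n$ in the latter case) removes the twist. The only cosmetic difference is that the paper inverts $2$ rather than $2^{(d/2)-1}$, which yields the same localization.
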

\begin{proof}
As explained in the proof of Theorem \ref{thm:two}, we have the equivalences
\begin{eqnarray}
& T^l(X_L) \Leftrightarrow T^l_{\mathrm{nc}}(\perf_\dg(\widetilde{\bbP}(L); \cF)) & T^p(X_L) \Leftrightarrow T^p_{\mathrm{nc}}(\perf_\dg(\widetilde{\bbP}(L); \cF)) \label{eq:eq:last111}\\
& B(X_L) \Leftrightarrow B_{\mathrm{nc}}(\perf_\dg(\widetilde{\bbP}(L); \cF)) & P(X_L) \Leftrightarrow P_{\mathrm{nc}}(\perf_\dg(\widetilde{\bbP}(L); \cF)) \label{eq:eq:last1111}\,,
\end{eqnarray}
where $\cF$ is a certain sheaf of Azumaya algebras over $\widetilde{\bbP}(L)$ of rank $2^{(d/2)-1}$. 

We start by proving that the first right-hand side conjecture in \eqref{eq:eq:last111} is equivalent to $T^l(\widetilde{\bbP}(L))$ (for every $l\neq 2$). Consider the following functors 
\begin{eqnarray*}
& E_m(-)_{1/2}\colon \dgcat(k) \too \mathrm{Mod}(\bbZ[1/2]) & \cA \mapsto (\pi_{-1}L_{KU} K(\cA\otimes_{\bbF_q} \bbF_{q^m}))_{1/2}
\end{eqnarray*}
with values in the additive category of $\bbZ[1/2]$-modules. Similarly to the proof of Proposition \ref{prop:additive}, these functors are additive invariants. Consequently, using the fact that the rank of the sheaf of Azumaya algebras $\cF$  is a power of $2$ and that the category $\mathrm{Mod}(\bbZ[1/2])$ is $\bbZ[1/2]$-linear, we conclude from \cite[Thm.~2.1]{Azumaya} that $E_m(\perf_\dg(\widetilde{\bbP}(L)))_{1/2} \simeq E_m(\perf_\dg(\widetilde{\bbP}(L); \cF))_{1/2}$. By applying the functor $\Hom(\bbZ(l^\infty),-)$ to these isomorphisms, we hence obtain the equivalences:
\begin{equation}\label{eq:3}
T^l_{\mathrm{nc}}(\perf_\dg(\widetilde{\bbP}(L); \cF); 1/2) \Leftrightarrow T^l_{\mathrm{nc}}(\perf_\dg(\widetilde{\bbP}(L)); 1/2)) \Leftrightarrow T^l(\widetilde{\bbP}(L); 1/2)\,.
\end{equation}
Thanks to Lemma \ref{lem:aux}, the preceding equivalence \eqref{eq:3} yields the equivalence $T^l_{\mathrm{nc}}(\perf_\dg(\widetilde{\bbP}(L); \cF)) \Leftrightarrow T^l(\widetilde{\bbP}(L))$ (for every $l\neq 2$), and so the proof is finished.

Let us now prove that the second right-hand side conjecture in \eqref{eq:eq:last111} is equivalent to $T^p(\widetilde{\bbP}(L))$. Since the functor \eqref{eq:universal} is additive and $\mathrm{char}(K)=0$, \cite[Thm.~2.1]{Azumaya} implies that $U(\perf_\dg(\widetilde{\bbP}(L)))_K\simeq U(\perf_\dg(\widetilde{\bbP}(L);\cF))_K$ in $\Hmo_0(k)_K$. Recall from the proof of Lemma \ref{lem:factorization} comes equipped with the natural transformation $\varphi^n$. Therefore, by applying the $K$-linear functor \eqref{eq:TP} to the latter isomorphism, we conclude that the induced $K$-linear homomorphism
$$ K_0(\perf_\dg(\widetilde{\bbP}(L)))_K \too TP_0(\perf_\dg(\widetilde{\bbP}(L)))^{\varphi^n}_{1/p}$$
identifies with the induced $K$-linear homomorphism
$$ K_0(\perf_\dg(\widetilde{\bbP}(L); \cF))_K \too TP_0(\perf_\dg(\widetilde{\bbP}(L);\cF))^{\varphi^n}_{1/p}\,.$$
This implies the following equivalences of conjectures:
\begin{equation*}
T^p_{\mathrm{nc}}(\perf_\dg(\widetilde{\bbP}(L); \cF)) \Leftrightarrow T^p_{\mathrm{nc}}(\perf_\dg(\widetilde{\bbP}(L))) \Leftrightarrow T^p(\widetilde{\bbP}(L))\,.
\end{equation*}

Let us now prove that the first right-hand side conjecture in \eqref{eq:eq:last1111} is equivalent to $B(\widetilde{\bbP}(L))$. As above, we have $U(\perf_\dg(\widetilde{\bbP}(L))_\bbQ \simeq U(\perf_\dg(\widetilde{\bbP}(L); \cF))_\bbQ$ in $\Hmo_0(k)_\bbQ$. Thanks to the natural isomorphisms \eqref{eq:isos} and \eqref{eq:numerical}, by applying the functors $\Hom_{\NChow(k)_\bbQ}(U(k)_\bbQ, -)$ and $\Hom_{\NNum(k)_\bbQ}(U(k)_\bbQ, -)$ to the latter isomorphism, we hence obtain the following equivalences of conjectures:
\begin{equation*}\label{eq:4}
B_{\mathrm{nc}}(\perf_\dg(\widetilde{\bbP}(L); \cF)) \Leftrightarrow B_{\mathrm{nc}}(\perf_\dg(\widetilde{\bbP}(L))) \Leftrightarrow B(\widetilde{\bbP}(L))\,.
\end{equation*}
Finally, since the functors \eqref{eq:K-theory} are additive invariants (with values in the additive category of $\bbQ$-vector spaces), the proof that the second right-hand side conjecture in \eqref{eq:eq:last1111} is equivalent to $P(\widetilde{\bbP}(L))$ is similar to the proof that the first right-hand side conjecture in \eqref{eq:eq:last111} is equivalent to $T^l(\widetilde{\bbP}(L))$ (for every $l\neq 2$).
\end{proof}
\begin{remark}[Azumaya algebras]\label{rk:Azumaya}
Let $X$ be a smooth projective $k$-scheme and $\cF$ a sheaf of Azumaya algebras over $X$ of rank $r$. Note that an argument similar to the one used in the proof of Theorem \ref{prop:2} leads to the following equivalences:
\begin{eqnarray*}
& \,\,\,\,T^l_{\mathrm{nc}}(\perf_\dg(X;\cF)) \Leftrightarrow T^l(X) & (\mathrm{for}\,\,\mathrm{every}\,\, l \neq r) \\
& \,\,\,\,\,T^p_{\mathrm{nc}}(\perf_\dg(X;\cF)) \Leftrightarrow T^p(X) & \\
& B_{\mathrm{nc}}(\perf_\dg(X;\cF))) \Leftrightarrow B(X) & \\
& \,\,\,\, P_{\mathrm{nc}}(\perf_\dg(X;\cF))) \Leftrightarrow P(X)\,. &
\end{eqnarray*}
\end{remark}

\section{Proof of Theorem \ref{thm:orbifold}}
We start by proving Theorem \ref{thm:orbifold} in what regards the Tate conjecture. Consider the following functors with values in the additive category of $\bbZ[1/s]$-modules:
\begin{eqnarray}\label{eq:functors-3}
&& E_m(-)_{1/s}\colon \dgcat(k) \too \mathrm{Mod}(\bbZ[1/s]) \quad \cA \mapsto (\pi_{-1}L_{KU} K(\cA\otimes_{\bbF_q} \bbF_{q^m}))_{1/s}\,.
\end{eqnarray}
Similarly to the proof of Proposition \ref{prop:additive}, these functors are additive invariants. Consequently, using the fact that $1/s \in k$ (since $p\nmid s$) and that  category $\mathrm{Mod}(\bbZ[1/s])$ is $\bbZ[1/s]$-linear, we conclude from \cite[Thm.~1.1 and Rk.~1.4]{Orbifold} that $E_m(\perf_\dg(\cX))_{1/s}$ is a direct summand of $\bigoplus_{\sigma\subseteq G} E_m(\perf_\dg(X^\sigma \times \mathrm{Spec}(k[\sigma])))_{1/s}$. By applying the functor $\Hom(\bbZ(l^\infty),-)$, we hence obtain the following implication:
\begin{equation}\label{eq:implication-aux1}
\sum_{\sigma \subseteq G} T^l(X^\sigma \times \mathrm{Spec}(k[\sigma]); 1/s) \Rightarrow T^l(\cX; 1/s)\,.
\end{equation}
The searched implication \eqref{eq:implication-main1} follows then from the combination of \eqref{eq:implication-aux1} with Lemma \ref{lem:aux}. Assume now that $s\mid (q-1)$. Note that $s\mid (q-1)$ if and only if $k$ contains the $s^{\mathrm{th}}$ roots of unity. Therefore, since $1/s \in k$, since the above functors \eqref{eq:functors-3} are additive invariants, and since the category $\mathrm{Mod}(\bbZ[1/s])$ is $\bbZ[1/s]$-linear, we conclude from \cite[Cor.~1.6(i)]{Orbifold} that $E_m(\perf_\dg(\cX))_{1/s}$ is a direct summand of $\bigoplus_{\sigma \subseteq G} E_m(\perf_\dg(X^\sigma))^{\oplus r_\sigma}_{1/s}$, where $r_\sigma \geq 1$ are certain integers. By applying the functor $\Hom(\bbZ(l^\infty),-)$, we hence obtain the following implication:
\begin{equation}\label{eq:implication-aux}
\sum_{\sigma \subseteq G} T^l(X^\sigma; 1/s) \Rightarrow T^l(\cX; 1/s)\,.
\end{equation}
As above, the searched implication $\sum_{\sigma \subseteq G} T^l(X^\sigma) \Rightarrow T^l(\cX)$ (for every $l\nmid s$) follows then from the combination of \eqref{eq:implication-aux} with Lemma \ref{lem:aux}. Assume finally that $\mathrm{dim}(X)\leq 3$. In this case, the dimension of the smooth projective $k$-schemes $X^\sigma$ and $X^\sigma \times \mathrm{Spec}(k[\sigma])$ is also $\leq 3$. Therefore, as explained in \S\ref{sub:divisors}, it suffices to consider the Tate conjecture for divisors $T^{l,1}(-)$. As proved in \cite[Thm.~5.2]{Tate-motives}, we have the following equivalences:
$$ T^{l,1}(X^\sigma \times \mathrm{Spec}(k[\sigma]))\Leftrightarrow T^{l,1}(X^\sigma) + T^{l,1}(\mathrm{Spec}(k[\sigma]))\Leftrightarrow T^{l,1}(X^\sigma)\,.$$
This yields the equivalence of conjectures $T^l(X\times \mathrm{Spec}(k[\sigma])) \Leftrightarrow T^l(X^\sigma)$ and hence the searched implication $\sum_{\sigma \subseteq G} T^l(X^\sigma) \Rightarrow T^l(\cX)$ (for every $l\nmid s$). 

Let us now prove Theorem \ref{thm:orbifold} in what regards the $p$-version of the Tate conjecture. Since the functor \eqref{eq:universal} is additive, $1/s \in k$, and $\mathrm{char}(K)=0$, \cite[Thm.~1.1 and Rk.~1.4]{Orbifold} implies that $U(\perf_\dg(\cX))_K$ is a direct summand of the direct sum $\bigoplus_{\sigma\subseteq G} U(\perf_\dg(X^\sigma \times \mathrm{Spec}(k[\sigma])))_K$. Recall from the proof of Lemma \ref{lem:factorization} that the functor \eqref{eq:TP} comes equipped with the natural transformation $\varphi^n$. Therefore, since the dg categories $\perf_\dg(\cX)$ and $\perf_\dg(X^\sigma \times \mathrm{Spec}(k[\sigma]))$ are smooth proper, by applying the functor \eqref{eq:TP} we conclude that the induced $K$-linear homomorphism 
$$ K_0(\perf_\dg(\cX))_K \too TP_0(\perf_\dg(\cX))_{1/p}^{\varphi^n}$$
is a direct summand of the induced (diagonal) $K$-linear homomorphism
$$\bigoplus_{\sigma \subseteq G} K_0(X^\sigma \times \mathrm{Spec}(k[\sigma]))_K \too  \bigoplus_{\sigma \subseteq G} TP_0(X^\sigma \times \mathrm{Spec}(k[\sigma]))_{1/p}^{\varphi^n}\,.$$
This yields the searched implication \eqref{eq:implication-main2} Assume now that $s\mid (q-1)$. In this case, since $k$ contains the $s^{\mathrm{th}}$ roots of unity, \cite[Cor.~1.6(i)]{Orbifold} implies that $U(\perf_\dg(\cX))_K$ is a direct summand of $\bigoplus_{\sigma\subseteq G} U(\perf_\dg(X^\sigma))^{r_\sigma}_K$, where $r_\sigma$ are certain integers. Hence, an argument similar to the preceding one shows that $\sum_{\sigma \subseteq G} T^p(X^\sigma)~\Rightarrow~T^p(\cX)$. Assume finally that $\mathrm{dim}(X)\leq 3$. As explained in \S\ref{sub:divisors}, since the dimension of $X^\sigma$ and $X^\sigma \times \mathrm{Spec}(k[\sigma])$ is $\leq 3$, we have $T^p(X^\sigma) \Leftrightarrow T^l(X^\sigma)$ and $T^p(X^\sigma \times \mathrm{Spec}(k[\sigma]))\Leftrightarrow T^l(X^\sigma \times \mathrm{Spec}(k[\sigma]))$. Consequently, making use of the above equivalence of conjectures $T^l(X\times \mathrm{Spec}(k[\sigma])) \Leftrightarrow T^l(X^\sigma)$, we conclude that $\sum_{\sigma \subseteq G} T^p(X^\sigma) \Rightarrow T^p(\cX)$.

Let us now prove Theorem \ref{thm:orbifold} in what regards the Beilinson conjecture. As above, $U(\perf_\dg(\cX))_\bbQ$ is a direct summand of $\bigoplus_{\sigma\subseteq G} U(\perf_\dg(X^\sigma \times \mathrm{Spec}(k[\sigma])))_\bbQ$. Therefore, thanks to the natural isomorphisms \eqref{eq:isos} and \eqref{eq:numerical}, by applying the functors $\Hom_{\NChow(k)_\bbQ}(U(k)_\bbQ, -)$ and $\Hom_{\NNum(k)_\bbQ}(U(k)_\bbQ, -)$, we obtain the implication \eqref{eq:implication-main3}. Once again as above, whenever $m\mid (q-1)$, $U(\perf_\dg(\cX))_\bbQ$ is a direct summand of $\bigoplus_{\sigma \subseteq G} U(\perf_\dg(X^\sigma))^{\oplus r_\sigma}_\bbQ$. Hence, an argument similar to the preceding one shows that $\sum_{\sigma \subseteq G} B(X^\sigma) \Rightarrow B(\cX)$.

Finally, since the functors \eqref{eq:K-theory} are additive invariants (with values in the additive category of $\bbQ$-vector spaces), the proof of Theorem \ref{thm:orbifold} in what regards the Parshin conjecture is similar to the above proof of Theorem \ref{thm:orbifold} in what regards the Tate conjecture.
\section*{Proof of Theorem \ref{prop:twisted}}
We start by proving Theorem \ref{prop:twisted} in what regards the Tate conjecture. Since by assumption $s\mid (q-1)$, $k$ contains the $s^{\mathrm{th}}$ roots of unity and $1/s \in k$. Therefore, using the fact that the above functors \eqref{eq:functors-3} (with $s$ replaced by $sr$) are additive invariants and that the category $\mathrm{Mod}(\bbZ[1/sr])$ is $\bbZ[1/sr]$-linear, we conclude from \cite[Cor.~1.29(ii)]{Orbifold} that $E_m(\perf_\dg(\cX; \cF))_{1/sr}$ is a direct summand of $\bigoplus_{\sigma \subseteq G} E_m(\perf_\dg(Y_\sigma))_{1/sr}$, where $Y_\sigma$ is a certain $\sigma^\vee$-Galois cover of $X^\sigma$. By applying the functor $\Hom(\bbZ(l^\infty),-)$, we hence obtain the following implication: 
\begin{equation*}
\sum_{\sigma \subseteq G} T^l(Y_\sigma; 1/sr) \Rightarrow T^l(\cX;\cF; 1/sr)\,.
\end{equation*}
Lemma \ref{lem:aux} yields then the following implication:
\begin{eqnarray}\label{eq:implication-big1}
 \sum_{\sigma \subseteq G} T^l(Y_\sigma) \Rightarrow T^l(\cX;\cF) && (\mathrm{for}\,\mathrm{every}\,\,l \nmid sr)\,.
\end{eqnarray}
On the one hand, if $\mathrm{dim}(X)\leq 1$, then the dimension of $Y_\sigma$ is also $\leq 1$ for every $\sigma \subseteq G$. Consequently, the conjecture $T^l(\cX;\cF)$ follows from the implication \eqref{eq:implication-big1}. On the other hand, if the $G$-action is faithful and $\mathrm{dim}(X)=2$, then $\mathrm{dim}(Y_\sigma)=\mathrm{dim}(X^\sigma)\leq 1$ for every non-trivial cyclic subgroup $\sigma \subseteq G$. Consequently, in the above implication \eqref{eq:implication-big1} the $k$-schemes $Y_\sigma$ can be replaced by the single~$k$-scheme~$X$.

Let us now prove Theorem \ref{prop:twisted} in what regards the $p$-version of the Tate conjecture. Since the functor \eqref{eq:universal} is additive, $k$ contains the $s^{\mathrm{th}}$ roots of unity, and $\mathrm{char}(K)=0$, \cite[Cor.~1.29(ii)]{Orbifold} implies that $U(\perf_\dg(\cX;\cF))_K$ is a direct summand of $\bigoplus_{\sigma \subseteq G} U(\perf_\dg(Y_\sigma))_K$. Recall from the proof of Lemma \ref{lem:factorization} that the functor \eqref{eq:TP} comes equipped with the natural transformation $\varphi^n$. Therefore, since the dg categories $\perf_\dg(\cX;\cF)$ and $\perf_\dg(Y_\sigma)$ are smooth proper, by applying the functor \eqref{eq:TP} we conclude that the induced $K$-linear homomorphism 
$$ K_0(\perf_\dg(\cX;\cF))_K \too TP_0(\perf_\dg(\cX;\cF))_{1/p}^{\varphi^n}$$
is a direct summand of the induced (diagonal) $K$-linear homomorphism
$$ \bigoplus_{\sigma \subseteq G} K_0(\perf_\dg(Y_\sigma))_K \too \bigoplus_{\sigma \subseteq G} TP_0(\perf_\dg(Y_\sigma))_{1/p}^{\varphi^n}\,.$$
This yields the implication $
\sum_{\sigma \subseteq G} T^p(Y_\sigma) \Rightarrow T^p(\cX;\cF)$. The remainder of the proof is now similar to the above proof, concerning the Tate conjecture, with the implication \eqref{eq:implication-big1} replaced by the latter implication.

Let us now prove Theorem \ref{prop:twisted} in what regards the Beilinson conjecture. As above, $U(\perf_\dg(\cX; \cF))_\bbQ$ is a direct summand of $\bigoplus_{\sigma \subseteq G} U(\perf_\dg(Y_\sigma))_\bbQ$. Therefore, thanks to the natural isomorphisms \eqref{eq:isos} and \eqref{eq:numerical}, by applying the functors $\Hom_{\NChow(k)_\bbQ}(U(k)_\bbQ, -)$ and $\Hom_{\NNum(k)_\bbQ}(U(k)_\bbQ, -)$, we obtain the implication $\sum_{\sigma \subseteq G} B(Y_\sigma) \Rightarrow B(\cX;\cF)$. The remainder of the proof is now similar to the above proof, concerning the Tate conjecture, with \eqref{eq:implication-big1} replaced by the latter implication.

Finally, since the functors \eqref{eq:K-theory} are additive invariants (with values in the additive category of $\bbQ$-vector spaces), the proof of Theorem \ref{prop:twisted} in what regards the Parshin conjecture is similar to the above proof of Theorem \ref{prop:twisted} in what regards the Tate conjecture.

\medbreak\noindent\textbf{Acknowledgments:} I am grateful to Bruno Kahn for his interest in the arguments used in the proof of Theorem \ref{thm:two} and for the reference \cite{Reid}, to Kiran Kedlaya for asking me if the $p$-version of the Tate conjecture would admit a noncommutative analogue, and to Lars Hesselholt and Peter Scholze for useful discussions concerning topological periodic cyclic homology. I also would like to thank the Hausdorff Research Institute for Mathematics in Bonn for its hospitality.


\begin{thebibliography}{00}

\bibitem{Bernardara} A.~Auel, M.~Bernardara and M.~Bolognesi, {\em Fibrations in complete intersections of quadrics, Clifford algebras, derived categories, and rationality problems}. J.~Math.~Pures Appl. (9)~{\bf 102} (2014), no.~1, 249--291.

\bibitem{Beilinson} A.~Beilinson, {\em Coherent sheaves on $\bbP^n$ and problems in linear algebra}. Funktsional. Anal. i Prilozhen. {\bf 12} (1978), no.~3, 68--69.

\bibitem{Marcello} M. Bernardara, M. Bolognesi and D. Faenzi, {\em Homological projective duality for determinantal varieties}. Adv. Math. {\bf 296} (2016), 181--209.

\bibitem{BMS2} B.~Bhatt, M.~ Morrow and P.~Scholze, {\em Topological Hochschild homology and integral $p$-adic Hodge theory}. Available at arXiv:1802.03261.

\bibitem{VdB} R.~Buchweitz, G.~Leuschke and M.~Van den Bergh, {\em On the derived category of Grassmannians in arbitrary characteristic}. Compos. Math. {\bf 151} (2015), no.~7, 1242--1264. 

\bibitem{Faenzi} A.~Conca and D.~Faenzi, {\em A remark on hyperplane sections of rational normal scrolls}. Available at arXiv:1709.08332.

\bibitem{Fulton} W.~Fulton, {\em Intersection theory}. Ergebnisse der Mathematik und ihrer Grenzgebiete (3), {\bf 2}. Springer-Verlag, Berlin, 1984.

\bibitem{Grayson} D.~Grayson, {\em Finite generation of $K$-groups of a curve over a finite field (after Daniel Quillen)}. Algebraic K-theory, Part I (Oberwolfach, 1980), pp. 69--90, 
LNM {\bf 966}, 1982.

\bibitem{Harder} G.~Harder, {\em Die Kohomologie S-arithmetischer Gruppen \"uber Funktionenk\"orpern}. Invent. Math. {\bf 42} (1977), 135--175.

\bibitem{Harris} J.~Harris, {\em Algebraic geometry.  A first course}. Graduate Texts in Mathematics, {\bf 133}. Springer-Verlag, New York, 1992.

\bibitem{Lars} L.~Hesselholt, {\em Topological periodic cyclic homology and the Hasse-Weil zeta function}. Available at arXiv:1602.01980.

\bibitem{Ueda} A.~Ishii and K.~Ueda, {\em The special McKay correspondence and exceptional collections}. Tohoku Math. J. (2) {\bf 67} (2015), no.~4, 585--609. 

\bibitem{Handbook} B. Kahn, {\em Algebraic $K$-theory, algebraic cycles and arithmetic geometry}. Handbook of Algebraic $K$-theory, Berlin, New York. Springer-Verlag, pp. 351--428, 2005.

\bibitem{Kapranov} M.~Kapranov, {\em On the derived categories of coherent sheaves on some homogeneous spaces}. Invent. Math. {\bf 92} (1988), no.~3, 479--508.

\bibitem{KS} K.~Kato and S.~Saito, {\em Unramified class field theory of arithmetical surfaces}. Ann. of Math. {\bf 118} (1983), 241--275.

\bibitem{ICM-Keller} B.~Keller, {\em On differential graded categories}. International Congress of Mathematicians (Madrid), Vol.~II,  151--190. Eur.~Math.~Soc., Z{\"u}rich (2006).

\bibitem{Miami} M.~Kontsevich, {\em Mixed noncommutative motives}. Talk at the Workshop on Homological Mirror Symmetry,  Miami, 2010. Notes available at \url{www-math.mit.edu/auroux/frg/miami10-notes}. 

\bibitem{finMot} \bysame, {\em Notes on motives in finite characteristic}.  Algebra, arithmetic, and geometry: in honor of Yu. I. Manin. Vol. II,  213--247, Progr. Math., {\bf 270}, Birkh\"auser Boston, MA, 2009.  

\bibitem{IAS} \bysame, {\em Noncommutative motives}. Talk at the IAS on the occasion of the $61^{\mathrm{st}}$ birthday of Pierre Deligne (2005). Available at \url{http://video.ias.edu/Geometry-and-Arithmetic}.    

\bibitem{Kuznetsov-quadrics} A.~Kuznetsov, {\em Derived categories of quadric fibrations and intersections of quadrics}. Adv. Math. {\bf 218} (2008), no.~5, 1340--1369. 

\bibitem{Kuznetsov-IHES} \bysame, {\em Homological projective duality}. Publ. Math. IH\'ES, no. {\bf 105} (2007), 157--220. 

\bibitem{Kuznetsov-ICM} \bysame, {\em Semiorthogonal decompositions in algebraic geometry}. Available at arXiv:1404.3143. To appear in Proceedings of the International Congress of Mathematicians (2014), Seoul.

\bibitem{LO} V.~Lunts and D.~Orlov, {\em Uniqueness of enhancement for triangulated categories}. J. Amer. Math. Soc. {\bf 23} (2010), no.~3, 853--908.

\bibitem{Artin} M.~Marcolli and G.~Tabuada, {\em Noncommutative Artin motives}. Selecta Math. (N.S.) {\bf 20} (2014), no.~1, 315--358. 

\bibitem{Milne} J.~Milne, {\em \'Etale cohomology}. Princeton Mathematical Series, {\bf 33} (1980).

\bibitem{Milne-AIM} \bysame, {\em The Tate conjecture over finite fields}. AIM talk. Available at Milne's personal webpage \url{http://www.jmilne.org/math/articles/2007e.pdf}.

\bibitem{Morrow} M.~Morrow, {\em A variational Tate conjecture in crystalline cohomology}. Available at arXiv:1408.6783.

\bibitem{NS} T.~Nikolaus and P.~Scholze, {\em On topological cyclic homology}. Available at arXiv:1707.01799

\bibitem{Reid} M.~Reid, {\em The complete intersection of two or more quadrics}. Ph. D. thesis. Available at \url{https://homepages.warwick.ac.uk/~masda/3folds/qu.pdf}.

\bibitem{book} G.~Tabuada, {\em Noncommutative Motives}. With a preface by Yuri I. Manin. University Lecture
Series, {\bf 63}. American Mathematical Society, Providence, RI, 2015.

\bibitem{survey} \bysame, {\em Recent developments on noncommutative motives}. Available at arXiv:1611.05439. To appear in Contemporary Mathematics, AMS. 

\bibitem{positive} \bysame, {\em Noncommutative motives in positive characteristic and their applications}. Available at arXiv:1707.04248.

\bibitem{CD-positive} \bysame, {\em On Grothendieck's standard conjectures of type $C$ and $D$ in positive characteristic}. Available at arXiv:1710.04644.

\bibitem{Gysin} G.~Tabuada and M.~Van den Bergh, {\em The Gysin triangle via localization and $\bbA^1$-homotopy invariance}. Transactions of the American Mathematical Society {\bf 370} (2018), no.~1, 421--446.

\bibitem{Azumaya} \bysame, {\em Noncommutative motives of Azumaya algebras}. J. Inst. Math. Jussieu {\bf 14} (2015), no.~2, 379--403.

\bibitem{Orbifold} \bysame, Additive invariants of orbifolds. Available at arXiv:1612.03162. To appear in Geometry and Topology.

\bibitem{Tate-motives} J.~Tate, {\em Conjectures on algebraic cycles in $l$-adic cohomology}. Motives (Seattle, WA, 1991), 71--83, Proc. Sympos. Pure Math., {\bf 55}, Part 1, Amer. Math. Soc., Providence, RI, 1994. 

\bibitem{Tate} \bysame, {\em Algebraic cycles and poles of zeta functions}. Arithmetical Algebraic Geometry (Proc. Conf. Purdue Univ., 1963) pp. 93--110. Harper \& Row, New York 1965.

\bibitem{Thomas} R.~Thomas, {\em Notes on HPD}. Available at arXiv:1512.08985. To appear in Proceedings of the 2015 AMS Summer Institute, Salt Lake City.

\bibitem{Thomason} R.~Thomason, {\em A finiteness condition equivalent to the Tate conjecture over $\bbF_q$}. Algebraic $K$-theory and algebraic number theory (Honolulu, HI, 1987), 385--392, Contemp. Math., {\bf 83}, Amer. Math. Soc., Providence, RI, 1989. 

\bibitem{Totaro} B.~Totaro, {\em Recent progress on the Tate conjecture}. Bull. Amer. Math. Soc. {\bf 54} (2017), 575--590.
\end{thebibliography}
\end{document}

\end{proof}